\documentclass[a4paper,11pt]{article}
\usepackage{amsmath,amsthm,amssymb}
\usepackage{enumerate}

\usepackage[english]{babel}
\usepackage[utf8]{inputenc}
\usepackage[a4paper]{geometry}
\usepackage{latexsym}
\usepackage{amscd}
\usepackage{graphics}
\usepackage{color}
\usepackage{array}
\usepackage{mathrsfs}
\usepackage{graphicx}
\usepackage{stmaryrd}
\usepackage{mathabx}
\usepackage{dsfont}
\usepackage{comment}
\usepackage{calligra}
\usepackage{varioref}
\usepackage{prettyref}

\def\subjclass#1{{\renewcommand{\thefootnote}{}%
\footnote{\emph{Mathematics Subject Classification (2010):} #1}}}
\def\keywords#1{{\renewcommand{\thefootnote}{}%
\footnote{\emph{Keywords:} #1}}}
\def\ackn#1{{\renewcommand{\thefootnote}{}%
\footnote{#1}}}


\newtheorem{thm}{Theorem}[section]

\newtheorem{lem}[thm]{Lemma}

\newtheorem{prop}[thm]{Proposition}

\newtheorem{question}[thm]{Question}

\newrefformat{sec}{section \ref{#1}}
\newrefformat{thm}{Theorem \ref{#1}}
\newrefformat{cor}{Corollary \ref{#1}}
\newrefformat{lem}{Lemma \ref{#1}}
\newrefformat{prob}{Problem \ref{#1}}
\newrefformat{prop}{Proposition \ref{#1}}
\newrefformat{conj}{Conjecture \ref{#1}}
\newrefformat{fact}{Fact \ref{#1}}
\newrefformat{question}{Question \ref{#1}}



\theoremstyle{definition}
\newtheorem{defin}[thm]{Definition}
\newtheorem{rem}[thm]{Remark}



\numberwithin{equation}{section}





\newcommand{\Z}{\mathbb{Z}}
\newcommand{\K}{\mathbb{K}}
\newcommand{\C}{\mathbb{C}}
\newcommand{\N}{\mathbb{N}}
\newcommand{\R}{\mathbb{R}}

\DeclareMathOperator{\Id}{Id}
\DeclareMathOperator{\nul}{n}
\DeclareMathOperator{\dfc}{d}
\DeclareMathOperator{\ind}{ind}
\DeclareMathOperator{\incl}{i}
\DeclareMathOperator{\Emb}{Emb}
\DeclareMathOperator{\GL}{GL}

\makeatletter
\newcommand{\alaligne}{~\vspace*{\topsep}\nobreak\@afterheading}
\makeatother

\usepackage{glossaries}

\newacronym{hi}{HI}{hereditarily indecomposable}
\newcommand{\cL}{\mathcal{L}}
\newcommand{\Fred}{\mathcal{F}red}
\newcommand{\SFred}{\hat{\mathcal{F}}red}


\begin{document}






\title{Spectral-free methods in the theory of hereditarily indecomposable Banach spaces}

\author{N. de Rancourt}

\date{}

\maketitle


\subjclass{46B03, 46B04, 46B20, 47A53}

\ackn{The author was partially supported by the joint FWF–GA\v{C}R Grant No. 17-33849L: Filters, Ultrafilters and Connections with Forcing}

\keywords{Hereditarily indecomposable Banach spaces, Fredholm theory}


\begin{abstract}

We give new and simple proofs of some classical properties of hereditarily indecomposable Banach spaces, including the result by W. T. Gowers and B. Maurey that a hereditarily indecomposable Banach space cannot be isomorphic to a proper subspace of itself. These proofs do not make use of spectral theory and therefore, they work in real spaces as well as in complex spaces. We use our method to prove some new results. For example, we give a quantitative version of the latter result by Gowers and Maurey and deduce that Banach spaces that are isometric to all of their subspaces should have an unconditional basis with unconditional constant arbitrarily close to $1$. We also study the homotopy relation between into isomorphisms in hereditarily indecomposable spaces.

\end{abstract}

\section{Introduction}

In this paper, unless otherwise specified, when speaking about a \textit{Banach space} (or simply a \textit{space}), we shall mean an infinite-dimensional Banach space, and by \textit{subspace} of a Banach space, we shall always mean infinite-dimensional, closed subspace. By \textit{operator}, we shall always mean bounded linear operator, and by \textit{isometry}, we shall mean linear (non-necessarily sujective) isometry.

\smallskip

In 1993, W. T. Gowers and B. Maurey \cite{GowersMaureyHI} built the first example of a Banach space containing no unconditional basic sequence, thus solving the longstanding \textit{unconditional basic sequence problem}. The space they built actually has a much stronger property: it is \textit{\gls{hi}}, i.e. no two subspaces of it are in topological direct sum. In the same paper, the authors prove several properties of \gls{hi} spaces. Among them, the following:

\begin{thm}\label{thm:main}
An \gls{hi} space is isomorphic to no proper subspace of itself.
\end{thm}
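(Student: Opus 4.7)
The plan is to argue by contradiction: suppose $T\colon X\to X$ is an isomorphism onto a proper subspace $Y=T(X)\subsetneq X$. The crux of the classical Gowers--Maurey argument is a structural dichotomy for operators on an HI space, namely that every such operator splits as $T=\lambda I+S$ for some scalar $\lambda$ and some strictly singular $S$. In the complex case this is classically proven by selecting $\lambda$ from the essential spectrum of $T$, a spectral argument that is unavailable for real HI spaces and for which the paper must supply a substitute. I will take this decomposition as a black box for now and indicate below where the spectral-free input is required.

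Once the decomposition is in hand, two cases arise. If $\lambda=0$, then $T=S$ is strictly singular, but $T$ is by hypothesis an isomorphism of $X$ onto an infinite-dimensional subspace, and hence bounded below on all of $X$; this contradicts the defining property of strictly singular operators (they cannot be bounded below on any infinite-dimensional subspace). So $\lambda=0$ is ruled out.

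If $\lambda\neq 0$, I factor $T=\lambda(I+\lambda^{-1}S)$. The classical Riesz-type perturbation theory, which does \emph{not} rely on spectral theory, gives that $I+K$ is Fredholm of index $0$ whenever $K$ is strictly singular. Consequently $T$ is Fredholm of index $0$. Since $T$ is injective (being an isomorphism onto its range), $\dim\ker T=0$, and index $0$ then forces $\operatorname{codim}\ima T=0$, i.e.\ $T$ is surjective and $Y=T(X)=X$, contradicting $Y\subsetneq X$.

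The main obstacle, and the entire point of the paper, is thus the first step: producing the scalar $\lambda$ with $T-\lambda I$ strictly singular without appealing to eigenvalues, the holomorphic functional calculus, or any other complex-analytic machinery. Once that is achieved, the Fredholm bookkeeping above goes through uniformly over $\R$ and $\C$. A plausible avenue for the spectral-free step is to exploit the Gowers--Maurey dichotomy (every operator from a subspace of $X$ into $X$ is either strictly singular or bounded below on some further subspace) together with the HI property applied to cleverly chosen pairs of subspaces built from $T$, so as to extract $\lambda$ directly from the geometry of $X$ rather than from its spectrum; I expect the technical heart of the paper to be a quantitative version of such an extraction.
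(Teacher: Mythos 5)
There is a genuine gap: your entire argument hinges on the ``black box'' decomposition $T=\lambda\Id_X+S$ with $S$ strictly singular (\prettyref{thm:operatorsHI}), which you leave unproven, and which is in fact \emph{false} in general for real \gls{hi} spaces --- the paper says so explicitly in the introduction. So even if you could supply a spectral-free proof of the decomposition, your route would only recover the complex case (where it is exactly the original Gowers--Maurey argument: the Fredholm bookkeeping in your second and third paragraphs is correct but is not the issue). Your closing speculation --- that the paper extracts the scalar $\lambda$ by some quantitative geometric argument --- also misses the point: the paper never produces such a $\lambda$ at all.

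What the paper actually does is bypass the decomposition entirely, working with the notion of an \emph{infinitely singular value}: a scalar $\lambda$ such that $T-\lambda\Id_X$ is infinitely singular (i.e.\ not bounded below on any finite-codimensional subspace). Two facts are played against each other. First (\prettyref{lem:OneInfSing}), an operator on an \gls{hi} space has \emph{at most one} infinitely singular value; this is a short direct computation using that the two subspaces on which $T-\lambda\Id_X$ and $T-\mu\Id_X$ are small cannot be in topological direct sum. Second (\prettyref{lem:main}), a semi-Fredholm operator with \emph{nonzero} index has \emph{at least two} real infinitely singular values, one positive and one negative: if every $T_t=tT+(1-t)\Id_X$ were finitely singular, the index would be locally constant hence constant along $t\in[0,1]$, contradicting $\ind(T_0)=0\neq\ind(T_1)$; repeating with $tT-(1-t)\Id_X$ gives the value of the other sign. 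An isomorphism onto a proper subspace is semi-Fredholm of strictly negative index, so the two lemmas contradict each other. This argument uses only the local constancy of the Fredholm index and the \gls{hi} property, and is insensitive to whether the scalar field is $\R$ or $\C$.
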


Following Maurey \cite{Maurey}, an operator $T\colon X \to Y$ between two Banach spaces will be called an \textit{into isomorphism} if it is an isomorphism between $X$ and its image (or equivalently, if there exists $c > 0$ such that $\forall x \in X \; \|T(x)\| \geqslant c\|x\|$). Recall that an operator $S\colon X \to Y$ between two Banach spaces is \textit{strictly singular} if no restriction of $S$ to a subspace of $X$ is an into isomorphism. In their paper, Gowers and Maurey get \prettyref{thm:main} for complex \gls{hi} spaces as an immediate consequence (using basic Fredholm theory) of the following result:

\begin{thm}\label{thm:operatorsHI}
If $X$ is a complex \gls{hi} space then every operator $T\colon X \to X$ can be written $T = \lambda\Id_X + S$, where $\lambda \in \C$ and $S$ is a strictly singular operator.
\end{thm}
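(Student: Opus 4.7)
The plan is to establish the theorem by the classical Gowers--Maurey argument, which realizes $\cL(X)/\mathcal{SS}(X)$ as a complex Banach division algebra and then invokes Gelfand--Mazur. Let $\mathcal{SS}(X) \subseteq \cL(X)$ be the closed two-sided ideal of strictly singular operators, and set $\A := \cL(X)/\mathcal{SS}(X)$; this is a unital complex Banach algebra with unit $[\Id_X]$. The theorem is equivalent to the assertion $\A = \C\cdot[\Id_X]$, and since $\A$ is a complex Banach algebra, the Gelfand--Mazur theorem reduces this to showing that every nonzero element of $\A$ is invertible.

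The crux is the following key lemma: \emph{if $T \in \cL(X)$ is not strictly singular, then $T$ is Fredholm.} A Fredholm operator is invertible modulo finite-rank (hence modulo strictly singular) operators by classical Fredholm theory, so the lemma makes every nonzero $[T]$ invertible in $\A$. I would prove the lemma by showing separately that $\ker T$ is finite-dimensional and that $T(X)$ is closed of finite codimension. For the kernel: by hypothesis there is a subspace $Y\subseteq X$ and $c>0$ with $\|Ty\|\geqslant c\|y\|$ for all $y \in Y$; in particular $\ker T \cap Y = \{0\}$. If $\ker T$ were infinite-dimensional, the \gls{hi} property would force $\ker T$ and $Y$ to fail to be in topological direct sum, providing unit vectors $x_n \in \ker T$ and $y_n \in Y$ with $\|y_n - x_n\| \to 0$; applying $T$ gives $\|Ty_n\| = \|T(y_n-x_n)\|\to 0$, contradicting $\|Ty_n\|\geqslant c$.

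The range side is analogous in spirit but more delicate: after using the kernel step to reduce to $T$ injective (replacing $X$ by a suitable finite-codimensional subspace), one wants to promote the lower bound on $Y$ to a lower bound on all of $X$ and then show that $T(X)$ has finite codimension. The typical route is again to exhibit a pair of subspaces (on one of which $T$ behaves like an isomorphism, on the other like something degenerate) whose coexistence would contradict the \gls{hi} dichotomy, possibly after dualizing or passing to quotients. Once the key lemma is established, Gelfand--Mazur yields $\A \cong \C$ and hence the theorem. The main obstacle I foresee is precisely this range side of the key lemma: extracting a subspace on which $T$ is appropriately small requires standard but nontrivial basic-sequence manipulations before the \gls{hi} property can be applied as cleanly as in the kernel argument. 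The complex-scalar assumption enters essentially at the Gelfand--Mazur step, through the non-emptiness of the spectrum in a complex Banach algebra, which accounts for the restriction of the theorem to the complex case.
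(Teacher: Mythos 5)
Your overall frame is sound and is, in fact, essentially Ferenczi's Banach-algebra route mentioned in the introduction rather than anything in this paper (the paper does not reprove \prettyref{thm:operatorsHI}; it proves the scalar-free statement \prettyref{thm:operators} instead): the strictly singular operators form a closed two-sided ideal, a Fredholm operator is invertible modulo finite-rank operators, Gelfand--Mazur needs no commutativity, and your kernel argument is correct. But there is a genuine gap exactly where you flag an ``obstacle'': the range half of your key lemma (``not strictly singular $\Rightarrow$ Fredholm'') is not proved, and the strategy you sketch for it cannot work as described. The finite-codimensionality of $T(X)$ is not analogous to the kernel step. If $T$ is an into isomorphism of $X$ onto an infinite-codimensional subspace, then $T$ is bounded below on \emph{all} of $X$: there is no subspace on which $T$ is ``degenerate'', so no pair of subspaces inside $X$ whose coexistence contradicts the HI property, and the HI property gives no direct handle on the cokernel, since an infinite-codimensional range need not be complemented. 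Dualizing or passing to quotients does not help either, because duals and quotients of HI spaces need not be HI. In other words, the cokernel half of your key lemma is essentially equivalent to \prettyref{thm:main} itself, which is precisely the hard content; your proposal assumes it rather than proves it.

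To close the gap one needs a genuine index (or spectral) argument. The easy HI facts give only the upper semi-Fredholm half: by \prettyref{lem:CaracStrictSing} a non-strictly-singular operator on an HI space is finitely singular, hence by \prettyref{prop:CaracFinSing} semi-Fredholm with $\ind(T)<+\infty$. That the index is then $0$ (equivalently, that $T(X)$ has finite codimension) is obtained in the paper by combining \prettyref{lem:main} --- the homotopy $tT+(1-t)\Id_X$ together with local constancy of the Fredholm index forces a semi-Fredholm operator of nonzero index to have two distinct real infinitely singular values --- with \prettyref{lem:OneInfSing}, which forbids two such values on an HI space; Gowers and Maurey instead used the nonemptiness of the spectrum. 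With that lemma supplied (i.e.\ with \prettyref{thm:operators} in hand), your quotient-algebra and Gelfand--Mazur steps do correctly yield $T=\lambda\Id_X+S$ in the complex case, and you are right that this is where the complex scalars enter irreducibly.
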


Their proof of \prettyref{thm:operatorsHI} makes use of Fredholm theory and spectral theory, thus explaining the fact that it only works for complex spaces. Actually, it turns out that this result is not true in general for real \gls{hi} spaces. For such spaces $X$, the authors prove a general structural result for operators from $X$ to $X$, similar to \prettyref{thm:operatorsHI}, by passing to the complexification of $X$. From this result, they deduce \prettyref{thm:main} for real \gls{hi} spaces. As they mention, they do not know any direct proof of \prettyref{thm:main} in the real case. Later, V. Ferenczi \cite{FerencziHI} gave a new proof of \prettyref{thm:operatorsHI} using no spectral theory but rather Banach algebra methods (he actually proves a more general result); however, as far as the author of the present article knows, no direct proof of \prettyref{thm:main} for real spaces has been known by now.

\smallskip

The main aim of this paper is to introduce new methods in the theory of \gls{hi} spaces, using no spectral theory but only Fredholm theory, and thus working in real \gls{hi} spaces as well as in complex \gls{hi} spaces. These methods will allow us to give new and simpler proofs of some classical properties of \gls{hi} spaces, including \prettyref{thm:main}; this will be done in \prettyref{sec:ProofMain}. Our hope is that these new methods can be adapted more easily to broader contexts. We present such an adaptation in \prettyref{sec:Isomogeneous}, where a quantitative version of \prettyref{thm:main} is proved. This allows us to start an investigation of the \textit{isometrically homogeneous space problem}, asking if a space that is isometric to all of its subspaces should necessarily be isometric to $\ell_2$. Finally, in \prettyref{sec:Homotopy}, we prove some results about the homotopy relation between into isomorphisms from \gls{hi} spaces. We deduce from them that the general linear group of the real \gls{hi} space built by Gowers and Maurey in \cite{GowersMaureyHI} has exactly $4$ connected components.

\smallskip

We start this paper with recalling some results in Fredholm theory, in \prettyref{sec:Fredholm}.


\bigskip

\section{Fredholm theory}\label{sec:Fredholm}

In this section, we recall some results in Fredholm theory that will be useful in the rest of this paper.  For a more detailed presentation and proofs of these results, we refer to the survey \cite{Maurey} on Banach spaces with few operators, that contains a good introduction to Fredholm theory (see sections 3, 4 and 6).

\smallskip

In the rest of this paper, all Banach spaces will be over the field $\K:= \R$ or $\C$, the proofs working as well in both cases.

\smallskip

Let $X, Y$ be two Banach spaces. Let $\cL(X, Y)$ denote the space of bounded operators from $X$ to $Y$ (when $X = Y$, then this space will simply be denoted by $\cL(X)$). Equip $\cL(X, Y)$ with the operator norm denoted by $\|\cdot\|$, and with the associated topology. For $T\in \cL(X, Y)$, let $\nul(T) \in \N \cup \{+\infty\}$ and $\dfc(T)\in \N \cup \{+\infty\}$ denote respectively the dimension of the kernel of $T$ and the codimension of the image of $T$. The operator $T$ is said to be \textit{semi-Fredholm} if it has closed image and if one of the numbers $\nul(T)$ and $\dfc(T)$ is finite, and \textit{Fredholm} if both numbers $\nul(T)$ and $\dfc(T)$ are finite (this implies that $T$ has closed image). Let $\Fred(X, Y)$ and $\SFred(X, Y)$ denote respectively the set of Fredholm operators and of semi-Fredholm operators from $X$ to $Y$, seen as subsets of $\cL(X, Y)$ with the induced topology. For $T \in \SFred(X, Y)$, the \textit{Fredholm index} of $T$ is defined by $\ind(T) = \nul(T)-\dfc(T) \in \Z \cup\{- \infty, + \infty\}$. One of the fundamental results of Fredholm theory is the following:

\begin{thm}\label{thm:ContFredholm}
The Fredholm index $\ind : \SFred(X, Y) \longrightarrow \mathbb{Z} \cup \{- \infty, + \infty\}$ is locally constant.
\end{thm}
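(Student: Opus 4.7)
The plan is to fix $T_0 \in \SFred(X, Y)$ and show that $\ind$ is constant on a neighborhood of $T_0$. Without loss of generality I would assume $\nul(T_0) < \infty$; the case $\dfc(T_0) < \infty$ is analogous, either by passing to the Banach adjoint (which exchanges $\nul$ and $\dfc$ and negates the index) or by a symmetric decomposition argument on the target. Since $\ker(T_0)$ is then finite-dimensional, it admits a closed complement $X_0 \subseteq X$, and the restriction $U_0 := T_0|_{X_0} : X_0 \to Y$ is an into isomorphism with image $\ima(T_0)$.

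First, I would observe that the set of into isomorphisms is open in $\cL(X_0, Y)$, so $U := T|_{X_0}$ remains an into isomorphism for $T$ in a sufficiently small neighborhood of $T_0$. Moreover, $\ima(T) = \ima(U) + T(\ker T_0)$ differs from $\ima(U)$ by a subspace of dimension at most $\nul(T_0)$, and since $\ker(T)$ intersects $X_0$ trivially, $\nul(T) \leqslant \nul(T_0)$. In particular, any computation of $\dfc(T)$ reduces, up to a bounded finite-dimensional correction, to the computation of $\dfc(U)$.

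When $T_0$ is Fredholm, so $\dfc(T_0) < \infty$, the image $\ima(T_0)$ is complemented by some finite-dimensional $Y_1$ with $\dim Y_1 = \dfc(T_0)$. Writing $T$ as a $2 \times 2$ block matrix with respect to $X = X_0 \oplus \ker(T_0)$ and $Y = \ima(T_0) \oplus Y_1$, the $(1,1)$-block is close to the isomorphism $U_0$ and hence invertible for $T$ near $T_0$. The standard Schur complement factorization (multiplying on both sides by unipotent block-triangular isomorphisms of $X$ and $Y$) then conjugates $T$ to the direct sum of that isomorphism and a linear map $\ker(T_0) \to Y_1$ between finite-dimensional spaces. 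Since conjugation by isomorphisms preserves $\nul$ and $\dfc$, a finite-dimensional computation yields $\ind(T) = 0 + (\nul(T_0) - \dfc(T_0)) = \ind(T_0)$.

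The main obstacle is the purely semi-Fredholm case $\dfc(T_0) = \infty$, where $\ima(T_0)$ may fail to be complemented and the block-matrix argument breaks down. I would handle it by showing directly that $\dfc(U) = \infty$ for $U$ close to $U_0$: if one had $\dfc(U) = k < \infty$, choosing a basis $y_1, \ldots, y_k$ of a complement of $\ima(U)$ in $Y$ would make the map $V : X_0 \oplus \K^k \to Y$, $V(x, \alpha) = U(x) + \sum_i \alpha_i y_i$, an isomorphism, and by openness of the set of isomorphisms the nearby map $V_0 : (x, \alpha) \mapsto U_0(x) + \sum_i \alpha_i y_i$ would also be surjective, forcing $Y = \ima(U_0) + \Span(y_1, \ldots, y_k)$ and contradicting $\dfc(U_0) = \infty$. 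Combined with the earlier observation that $\dim(\ima T / \ima U) \leqslant \nul(T_0) < \infty$, this gives $\dfc(T) = \infty$ and hence $\ind(T) = -\infty = \ind(T_0)$.
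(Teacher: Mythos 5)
The paper itself does not prove this theorem: it is recalled as background, with a pointer to Maurey's survey, so there is no in-paper argument to compare yours with; I can only assess your proof on its own. Your reduction of the kernel (passing to a complement $X_0$ of $\ker(T_0)$ and noting that $U:=T_{\restriction X_0}$ stays an into isomorphism, that $\nul(T)\leqslant\nul(T_0)$ and that $\ima(T)/\ima(U)$ is finite-dimensional), your Schur-complement treatment of the Fredholm case, and the duality reduction are all fine; this is the standard route when both $\nul(T_0)$ and $\dfc(T_0)$ are finite.

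The genuine gap is in the purely semi-Fredholm step, which is exactly the case the theorem is really needed for (index $\pm\infty$), and which the $\dfc(T_0)<\infty$, $\nul(T_0)=\infty$ case also reduces to after dualizing. To rule out $\dfc(U)=k<\infty$ you build the isomorphism $V$ from $U$ and a complement $\Span(y_1,\dots,y_k)$ of $\ima(U)$, and then invoke openness of the set of isomorphisms to conclude that $V_0$ (built from $U_0$ and the same $y_i$) is surjective. But openness only applies when $\|V-V_0\|\leqslant\|U-U_0\|$ is smaller than $1/\|V^{-1}\|$, and $\|V^{-1}\|$ is governed by the norm of the projection of $Y$ onto $\ima(U)$ along $\Span(y_1,\dots,y_k)$. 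In a general Banach space this norm cannot be bounded independently of $k$ (the best general choices, e.g. via an Auerbach basis of $Y/\ima(U)$, give bounds growing with $k$), and $k=\dfc(U)$ is precisely the quantity you do not control: it depends on $T$ and may a priori be arbitrarily large for $T$ arbitrarily close to $T_0$. So your argument only shows that, for each fixed $k$, operators with $\dfc(T_{\restriction X_0})=k$ are excluded from some $k$-dependent neighbourhood of $T_0$; these neighbourhoods may shrink as $k\to\infty$, and no single neighbourhood witnessing local constancy is produced. (In Hilbert space one could take the $y_i$ orthonormal and orthogonal to $\ima(U)$ and the argument would close, but here one needs the result for general, indeed HI, spaces.) The standard ways to close this hole require an extra idea: either the gap lemma (if every unit vector of a closed subspace $M$ lies at distance $<1$ from a closed subspace $N$, then $\dim M\leqslant\dim N$, and by duality $\dim(Y/N)\leqslant\dim(Y/M)$), applied to $M=\ima(U)$ and $N=\ima(U_0)$, which gives $\dfc(U)=\infty$ as soon as $\|U-U_0\|$ is small relative to the lower bound of $U_0$ alone; or a connectedness argument along the segment $U_t=(1-t)U_0+tU$, whose members are uniformly bounded below, on which the set of $t$ with $\dfc(U_t)<\infty$ is open with locally constant codefect, so that the codefect is a fixed $k$ on the component of $t=1$ and your perturbation estimate, now with $k$ frozen, yields a contradiction at the endpoint of that component. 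As written, the key stability claim for infinite defect does not go through.
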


Say that an operator $T \colon X \to Y$ is \textit{finitely singular} if there exists a finite-codimensional subspace $X_0$ of $X$ such that $T_{\restriction X_0}$ is an into isomorphism. An operator which is not finitely singular will be called \textit{infinitely singular}. We have the two following results:

\begin{prop}\label{prop:CaracFinSing}

An operator $T \colon X \to Y$ is finitely singular if and only if it is semi-Fredholm with $\ind(T) < + \infty$.

\end{prop}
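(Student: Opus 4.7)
The plan is first to rewrite the right-hand side in a more usable form. Saying that $T \in \SFred(X,Y)$ with $\ind(T) < +\infty$ means that $T$ has closed image, one of $\nul(T), \dfc(T)$ is finite, and $\nul(T) - \dfc(T) < +\infty$; the last condition forces $\nul(T) < +\infty$ even in the case $\dfc(T) = +\infty$. So the equivalence to establish reduces to: $T$ is finitely singular if and only if $T$ has closed image and $\nul(T) < +\infty$.

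For the forward direction, assume $X_0 \subseteq X$ is a finite-codimensional subspace on which $T$ is an into isomorphism. Since $T_{|X_0}$ is injective, $\ker T \cap X_0 = \{0\}$, so $\ker T$ embeds into the finite-dimensional quotient $X/X_0$; hence $\nul(T) < +\infty$. To see that $T(X)$ is closed, choose a finite-dimensional $F$ with $X = X_0 \oplus F$. The image $T(X_0)$ is isomorphic to $X_0$, hence complete, hence closed in $Y$; and $T(X) = T(X_0) + T(F)$ is the sum of a closed subspace with a finite-dimensional one, which is standardly closed.

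For the backward direction, assume $\nul(T) < +\infty$ and $T(X)$ is closed. Since $\ker T$ is finite-dimensional, it admits a closed complement $X_0$ in $X$, and $X_0$ is automatically of finite codimension $\nul(T)$. The restriction $T_{|X_0}\colon X_0 \to T(X)$ is then a continuous linear bijection between two Banach spaces, so by the open mapping theorem it is an isomorphism. In particular $T_{|X_0}$ is an into isomorphism, which is exactly the finitely singular condition.

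The argument is essentially routine; the only mild subtleties are the preliminary reduction to the cleaner equivalence, the use that a finite-dimensional subspace is complemented in a Banach space, and the stability of closedness under addition of a finite-dimensional subspace. I do not expect a genuine obstacle.
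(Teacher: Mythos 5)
Your argument is correct and is the standard proof of this fact; note that the paper itself does not prove Proposition \ref{prop:CaracFinSing} but refers to Maurey's survey, where essentially the same routine argument (kernel finite-dimensional and complemented, open mapping theorem on a complement, closedness of the sum of a closed and a finite-dimensional subspace) is used. The only point worth making explicit is that the finite-codimensional subspace $X_0$ in the definition of finitely singular may be taken closed (the paper's convention for subspaces), which is what makes $T(X_0)$ complete and hence closed in your forward direction.
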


\begin{prop}\label{prop:CaracInfSing}

An operator $T \colon X \to Y$ is infinitely singular if and only if for every $\varepsilon > 0$, there exists a subspace $Y$ of $X$ such that $\left\|T_{\restriction Y}\right\| \leq \varepsilon$.

\end{prop}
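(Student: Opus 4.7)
The plan is to prove the two implications separately. For the easy direction ($\Leftarrow$), I argue by contrapositive: suppose $T$ is finitely singular, so there is a finite-codimensional $X_0 \subseteq X$ and $c > 0$ with $\|Tx\| \geq c\|x\|$ on $X_0$. For any subspace $Z$ of $X$, the intersection $X_0 \cap Z$ is still infinite-dimensional (because $X_0$ has finite codimension in $X$), so picking any unit vector $x$ in it yields $\|T_{\restriction Z}\| \geq \|Tx\| \geq c$. Hence no subspace witnesses $\|T_{\restriction Z}\| \leq \varepsilon$ for $\varepsilon < c$, and the right-hand side of the equivalence fails.

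For the nontrivial direction ($\Rightarrow$), assume $T$ is infinitely singular and fix $\varepsilon > 0$. The strategy is to construct a normalized basic sequence $(x_n)$ in $X$ along which $T$ decays summably fast; the closed span $Z := \overline{\Span}_n x_n$ will then satisfy $\|T_{\restriction Z}\| \leq \varepsilon$. Concretely, fix a target basis constant $C > 1$ (say $C = 2$) and positive scalars $\delta_n$ with $\sum_n \delta_n \leq \varepsilon/(2C)$, and proceed by induction. At step $n$, having built unit vectors $x_1, \ldots, x_{n-1}$, set $F_{n-1} = \Span(x_1, \ldots, x_{n-1})$; by the classical Mazur lemma (compactness of the unit sphere of $F_{n-1}$ combined with Hahn--Banach), choose a finite-codimensional subspace $V_{n-1} \subseteq X$ such that appending any unit vector of $V_{n-1}$ to the sequence preserves basicness with basis constant below $C$. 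Since $T$ is infinitely singular, $T_{\restriction V_{n-1}}$ is not bounded below, so there exists a unit vector $x_n \in V_{n-1}$ with $\|Tx_n\| < \delta_n$, completing the step.

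With the sequence in hand, any $x = \sum_n a_n x_n \in Z$ satisfies $|a_n| \leq 2C\|x\|$ (since $a_n x_n = (P_n - P_{n-1})(x)$ and the partial-sum projections $P_n$ have norm at most $C$), so
\[
\|Tx\| \leq \sum_n |a_n|\,\|Tx_n\| \leq 2C\|x\| \sum_n \delta_n \leq \varepsilon\|x\|,
\]
which gives $\|T_{\restriction Z}\| \leq \varepsilon$ as desired.

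The main obstacle is the simultaneous control of basicness and of $\|Tx_n\|$ during the inductive construction. This is not really difficult: the basicness is ensured by Mazur's lemma applied on each finite-dimensional span, while the smallness of $\|Tx_n\|$ is the direct content of the fact that $T_{\restriction V_{n-1}}$ fails to be an into isomorphism. The only mild care needed is to choose the basis-constant tolerances $\eta_n > 0$ at each step with $\prod_n (1+\eta_n) \leq C$, so that the total basis constant of the resulting sequence remains below $C$.
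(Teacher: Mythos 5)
Your proof is correct: the contrapositive argument for the easy direction is fine (the intersection of an infinite-dimensional subspace with a finite-codimensional one is indeed infinite-dimensional and closed), and for the hard direction the inductive choice of unit vectors $x_n$ in the Mazur subspaces $V_{n-1}$ is legitimate, since infinite singularity says exactly that $T$ is bounded below on no finite-codimensional subspace, and the final estimate $\|Tx\| \leq 2C\|x\|\sum_n \delta_n$ on the closed span is sound. The paper does not prove this proposition itself but cites Maurey's survey, and your argument is essentially the standard one given there: a gliding-hump construction of a basic sequence with summably small images, so there is nothing to add beyond noting that the notational clash in the statement (the subspace is called $Y$, like the codomain) is the paper's, not yours.
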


Finally we mention a last result that we will not use in our proofs, but which is the key of the originial proof of \prettyref{thm:main} from \prettyref{thm:operatorsHI}:

\begin{prop}\label{prop:SSingPerturbation}
Let $T \colon X \to Y$ be a Fredholm operator and $S \colon X \to Y$ be a strictly singular operator. Then $T + S$ is Fredholm and $\ind(T + S) = \ind(T)$.
\end{prop}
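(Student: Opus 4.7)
The plan is to use the continuity of the Fredholm index (\prettyref{thm:ContFredholm}) along the straight-line path $T_t := T + tS$, $t \in [0,1]$, joining $T$ to $T+S$. Once we know that every $T_t$ lies in $\SFred(X,Y)$ with $\ind(T_t) < +\infty$, connectedness of $[0,1]$ will force the index to be constant equal to $\ind(T) \in \Z$, and finiteness of both $\nul(T+S)$ and $\ind(T+S)$ will then give $\dfc(T+S) < +\infty$, so that $T+S$ is Fredholm.

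The core of the argument is therefore the first step: showing that each $T_t$ is finitely singular. Since $T$ is Fredholm, \prettyref{prop:CaracFinSing} supplies a finite-codimensional subspace $X_0 \subseteq X$ and a constant $c > 0$ with $\|Tx\| \geqslant c \|x\|$ for all $x \in X_0$. Fix $t \in (0,1]$ and suppose towards a contradiction that $T_t$ is infinitely singular. A simple observation is that infinite singularity is inherited by restrictions to finite-codimensional subspaces: if $(T_t)_{\restriction X_0}$ were finitely singular, \prettyref{prop:CaracFinSing} applied inside $X_0$ would provide $X_1 \subseteq X_0$ finite-codimensional in $X_0$, hence finite-codimensional in $X$, with $(T_t)_{\restriction X_1}$ an into isomorphism, contradicting our assumption on $T_t$. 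Hence $(T_t)_{\restriction X_0}$ is infinitely singular, and applying \prettyref{prop:CaracInfSing} to it with any $\varepsilon \in (0,c)$ yields a subspace $W \subseteq X_0$ on which $\|T_t\| \leqslant \varepsilon$. For $x \in W$ we then get
\[
\|tSx\| \;\geqslant\; \|Tx\| - \|T_t x\| \;\geqslant\; (c - \varepsilon)\|x\|,
\]
so $S_{\restriction W}$ is an into isomorphism, contradicting strict singularity of $S$. Thus $T_t$ is finitely singular for every $t \in (0,1]$, and trivially so for $t = 0$.

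Having placed the whole path in $\SFred(X,Y)$ with finite nullity, \prettyref{thm:ContFredholm} ensures that $t \mapsto \ind(T_t)$ is locally constant on the connected interval $[0,1]$. Thus $\ind(T+S) = \ind(T)$, an integer; combined with $\nul(T+S) < +\infty$, this forces $\dfc(T+S) < +\infty$, and $T+S$ is Fredholm.

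I expect the only genuinely delicate point to be the step in which \prettyref{prop:CaracInfSing} is applied \emph{inside} $X_0$: one must observe that infinite singularity descends to restrictions on finite-codimensional subspaces before one has the right to produce the small-norm subspace $W$ inside $X_0$. Everything else is a routine application of the two characterizations and of the homotopy invariance of the index.
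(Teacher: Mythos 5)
Your argument is correct and complete: the reduction of the problem to showing that each $T_t = T + tS$ is finitely singular, the descent of infinite singularity to the finite-codimensional subspace $X_0$ (which, as you note, is immediate from the definition), the use of \prettyref{prop:CaracInfSing} to produce a subspace on which $S$ would be bounded below, and the final bookkeeping with $\nul$ and $\ind$ are all sound. The paper itself does not prove this proposition --- it is recalled from the survey of Maurey cited in \prettyref{sec:Fredholm} --- but your proof is the standard one, and it is exactly in the spirit of the paper's spectral-free methods: the straight-line deformation combined with local constancy of the index is the same device used in \prettyref{lem:main}, so nothing here goes beyond the tools the paper already assumes.
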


The latter result, combined with \prettyref{thm:operatorsHI}, gives that every element of $\cL(X)$, for $X$ a complex \gls{hi} space, is either strictly singular or Fredholm with index $0$. In particular, such an operator cannot be an isomorphism between $X$ and a proper subspace.

\bigskip

\section{Subspaces, quotients and operators}\label{sec:ProofMain}

In this section we give a new proof of \prettyref{thm:main} and of some related results. Some elements of our proofs were already present in \cite{GowersMaureyHI}; to make this paper self-contained, we shall state them as lemmas and prove them.

\smallskip

Given $X$ a Banach space and $Y \subseteq X$ a subspace, the inclusion embedding $Y \hookrightarrow X$ will be denoted by $\incl_{Y, X}$ in the rest of this paper. We say that $\lambda \in \K$ is an \textit{infinitely singular value} of the operator $T \colon Y \to X$ when $T - \lambda\incl_{Y, X}$ is infinitely singular. The next lemma was already present as a remark in \cite{GowersMaureyHI}.

\begin{lem}\label{lem:OneInfSing}
If $X$ is a Banach space, $Y \subseteq X$ an \gls{hi} subspace, and $T \colon Y \to X$ an operator, then $T$ has at most one infinitely singular value.
\end{lem}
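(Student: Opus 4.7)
My plan is to argue by contradiction, combining the definition of the \gls{hi} property with the characterization of infinitely singular operators from \prettyref{prop:CaracInfSing}. Suppose $\lambda \neq \mu$ are two infinitely singular values of $T$. I would like to find, for arbitrarily small parameters, almost-equal unit vectors on which $T$ behaves almost like multiplication by $\lambda$ (respectively $\mu$), and then derive $|\lambda - \mu| \approx 0$.

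Concretely, fix $\varepsilon, \delta > 0$ to be specified at the end. Applying \prettyref{prop:CaracInfSing} to $T - \lambda\,\incl_{Y,X}$ and to $T - \mu\,\incl_{Y,X}$, I obtain subspaces $Z_1, Z_2 \subseteq Y$ such that
\[ \|(T - \lambda\,\incl_{Y,X})_{\restriction Z_1}\| \leq \varepsilon \quad \text{and} \quad \|(T - \mu\,\incl_{Y,X})_{\restriction Z_2}\| \leq \varepsilon. \]
Since $Y$ is \gls{hi}, the subspaces $Z_1$ and $Z_2$ are not in topological direct sum, so I can pick unit vectors $z_1 \in Z_1$ and $z_2 \in Z_2$ with $\|z_1 - z_2\| < \delta$.

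Now I would compute: the two bounds above give $\|T(z_1) - \lambda z_1\| \leq \varepsilon$ and $\|T(z_2) - \mu z_2\| \leq \varepsilon$, while the bound $\|z_1 - z_2\| < \delta$ yields $\|T(z_1) - T(z_2)\| \leq \|T\|\delta$. Combining these via the triangle inequality gives $\|\lambda z_1 - \mu z_2\| \leq 2\varepsilon + \|T\|\delta$, and then writing $\lambda z_1 - \mu z_2 = (\lambda - \mu) z_2 + \lambda(z_1 - z_2)$ and using $\|z_2\| = 1$ leads to
\[ |\lambda - \mu| \leq 2\varepsilon + (\|T\| + |\lambda|)\delta. \]
Taking $\varepsilon, \delta \to 0$ forces $\lambda = \mu$, a contradiction.

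I do not anticipate a genuine obstacle here: the argument is essentially a quantitative version of the observation that the \gls{hi} property forces two distinct ``asymptotic eigenvalues'' of $T$ to coincide. The only delicate point is being careful that the \gls{hi} hypothesis is used on $Y$ (not $X$), so that the two witnessing subspaces $Z_1, Z_2$ both lie in $Y$ where they can be coupled by the \gls{hi} condition, and that the inclusion operator in the definition of ``infinitely singular value'' is $\incl_{Y,X}$, so that the scalar multiples $\lambda z_i, \mu z_j$ one compares are honest vectors of $X$.
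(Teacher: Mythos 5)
Your proposal is correct and is essentially the same argument as the paper's: use \prettyref{prop:CaracInfSing} to get two subspaces of $Y$ on which $T$ is nearly $\lambda\,\incl_{Y,X}$ and nearly $\mu\,\incl_{Y,X}$, use the \gls{hi} property of $Y$ to couple them by nearly-equal unit vectors, and conclude $|\lambda-\mu|=O(\varepsilon+\delta)$ by the triangle inequality. The only cosmetic difference is that the paper uses a single parameter $\varepsilon$ for both the subspace bound and the vector proximity, and groups the terms slightly differently.
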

\begin{proof}
Suppose that $\lambda, \mu \in \K$ are two infinitely singular values of $T$. Let $\varepsilon > 0$; by \prettyref{prop:CaracInfSing} we can find two subspaces $U_\varepsilon, V_\varepsilon$ of $Y$ such that $\left\|(T - \lambda\incl_{Y, X})_{\restriction U_\varepsilon}\right\| \leq \varepsilon$ and $\left\|(T - \mu\incl_{Y, X})_{\restriction V_\varepsilon}\right\| \leq \varepsilon$. The subspaces $U_\varepsilon$ and $V_\varepsilon$ are not in topological direct sum, so we can find $u_\varepsilon \in S_{U_\varepsilon}$ and $v_\varepsilon \in S_{V_\varepsilon}$ such that $\left\|u_\varepsilon - v_\varepsilon\right\| \leq \varepsilon$. And we have:
\begin{eqnarray*}
|\lambda - \mu| & = & \left\| \lambda u_\varepsilon - \mu u_\varepsilon\right\| \\
& \leq & \left\|\lambda u_\varepsilon - \mu v_\varepsilon\right\| + |\mu|\cdot\left\|v_\varepsilon - u_\varepsilon\right\|\\
& \leq & \left\|\lambda u_\varepsilon - T(u_\varepsilon)\right\| + \left\|T(u_\varepsilon) - T(v_\varepsilon)\right\| + \left\|T(v_\varepsilon) - \mu v_\varepsilon\right\| + |\mu|\varepsilon\\
& \leq & \varepsilon + \|T\| \cdot \left\|u_\varepsilon - v_\varepsilon\right\| + \varepsilon + |\mu|\varepsilon\\
& \leq & (2 + |\mu| +\|T\|)\varepsilon.
\end{eqnarray*}
Thus, making $\varepsilon$ tend to $0$, we deduce that $\lambda = \mu$.
\end{proof}

The next lemma is the key of our proof. It is valid for every Banach space $X$, not necessarily \gls{hi}.

\begin{lem}\label{lem:main}
Let $X$ be a Banach space and $T \in \cL(X)$ be a semi-Fredholm operator with nonzero index. Then $T$ has at least two distinct real infinitely singular values, a positive one and a negative one.
\end{lem}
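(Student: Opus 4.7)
The plan is to apply the continuity of the Fredholm index (\prettyref{thm:ContFredholm}) to the continuous path $\phi \colon \R \to \cL(X)$ defined by $\phi(\lambda) := T - \lambda \Id_X$, and to derive a contradiction showing that the index cannot travel from $\ind(T) \neq 0$ at $\lambda = 0$ to $0$ at $|\lambda| > \|T\|$ without crossing an infinitely singular value of each sign. To that end I would first observe that the set $A := \{\lambda \in \R : \lambda \text{ is an infinitely singular value of } T\}$ is closed in $\R$: by \prettyref{prop:CaracFinSing}, the finitely singular operators coincide with the semi-Fredholm operators of index strictly less than $+\infty$, which form an open subset of $\cL(X)$ since $\SFred(X)$ is open and $\ind$ is locally constant. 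Hence $A$ is closed as a preimage by $\phi$ of a closed set.

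Next I would establish the two boundary behaviors. For $|\lambda| > \|T\|$, writing $\phi(\lambda) = -\lambda(\Id_X - \tfrac{1}{\lambda}T)$ and invoking the Neumann series shows that $\phi(\lambda)$ is invertible, so $\ind(\phi(\lambda)) = 0$. At $\lambda = 0$, since $\phi(0) = T$ lies in $\SFred(X)$, the local constancy of $\ind$ together with the continuity of $\phi$ yields some $\delta > 0$ such that $\ind(\phi(\lambda)) = \ind(T)$ for every $\lambda \in (-\delta, \delta)$.

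The main step is the contradiction. Suppose $A \cap (0, +\infty) = \emptyset$; then $\phi$ sends the connected set $(0, +\infty)$ into the set of finitely singular operators, on which $\ind$ is locally constant with values in $\Z \cup \{-\infty\}$. The composition $\ind \circ \phi$ is then a continuous map into a discrete space, hence constant on $(0, +\infty)$, and the invertibility at large $\lambda$ forces this constant to be $0$. But the local constancy at $0$ gives $\ind(\phi(\lambda)) = \ind(T) \neq 0$ for every $\lambda \in (0, \delta)$, a contradiction. A symmetric argument on $(-\infty, 0)$ produces a negative infinitely singular value.

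The subtle point to watch is the case $\ind(T) = +\infty$, in which $T$ itself is infinitely singular and $0 \in A$. This does not obstruct the argument: the hypothesis $A \cap (0, +\infty) = \emptyset$ is silent at $0$, and the local constancy of $\ind$ at $T \in \SFred(X)$ remains in force, so the proof goes through uniformly for every value of $\ind(T) \in \{+\infty\} \cup (\Z \setminus \{0\}) \cup \{-\infty\}$. It is the use of the real parameter $\lambda$ (rather than a complex one) that makes it possible to separate the half-lines $(0, +\infty)$ and $(-\infty, 0)$ and thereby extract both a positive and a negative infinitely singular value without any recourse to spectral theory.
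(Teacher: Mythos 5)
Your proof is correct and is essentially the paper's argument: the path $\lambda \mapsto T - \lambda\Id_X$ on each half-line is just a reparametrization (up to positive scalar) of the paper's segments $tT \pm (1-t)\Id_X$, with the Neumann-series invertibility for $|\lambda| > \|T\|$ playing the role of the endpoint $\Id_X$, and the contradiction via local constancy of the Fredholm index is the same. (Your side remarks on closedness of $A$ and openness of $\SFred(X)$ are true but not needed, since under the contradiction hypothesis $\phi(\lambda)$ is semi-Fredholm for all $\lambda \in (0,\delta)$ anyway.)
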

\begin{proof}
For $t \in [0, 1]$, define $T_t = tT + (1-t) \Id_X$. We show that there exists $t \in (0, 1)$ such that $T_t$ is infinitely singular; this will imply that $\frac{t-1}{t}$ is a negative infinitely singular value of $T$. Suppose not. Then by \prettyref{prop:CaracFinSing}, for every $t \in [0, 1]$, $T_t$ is semi-Fredholm. So letting $f(t) = \ind(T_t)$, we define a function $f : [0, 1] \longrightarrow \Z \cup \{- \infty, + \infty\}$; by \prettyref{thm:ContFredholm}, this function is locally constant, so constant. This is a contradiction since $f(0) = 0$ and $f(1) \neq 0$.

\smallskip

We prove in the same way that $T$ has a positive infinitely singular value, considering the operators $T_t' = tT - (1-t) \Id_X$.
\end{proof}

We can now conclude the proof of \prettyref{thm:main}. Let $X$ be an \gls{hi} space and $T\in\cL(X)$ be an isomorphism from $X$ to a proper subspace of $X$. Then $T$ is semi-Fredholm with $\ind(T) < 0$, so by \prettyref{lem:main} it has at least two distinct infinitely singular values. This contradicts \prettyref{lem:OneInfSing}.

\smallskip

We now give a new proof of a generalization of \prettyref{thm:main} first proved by Ferenczi \cite{FerencziQuotientHI}.

\begin{thm}
Let $X$ be an \gls{hi} space, and let $Z \subseteq Y$ be subspaces of $X$, with $Z$ not necessarily infinite-dimensional. Suppose that either the inclusion $Y \subseteq X$ is strict, or $Z\neq\{0\}$. Then $X$ is not isomorphic to $Y/Z$.
\end{thm}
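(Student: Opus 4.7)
The plan is to mimic the proof of Theorem~\ref{thm:main} given above, substituting the inclusion $\incl_{Y, X}$ for $\Id_X$ and proving along the way a slight generalization of Lemma~\ref{lem:main}.

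Suppose, for contradiction, that $U \colon X \to Y/Z$ is an isomorphism, and let $q \colon Y \to Y/Z$ be the canonical quotient map. Define
\[
T := U^{-1} \circ q \colon Y \longrightarrow X.
\]
Since $U^{-1}$ is injective and $q$ is surjective, $\ker T = Z$ and $\ima T = X$; in particular $T$ has closed image. Hence $T$ is semi-Fredholm with $\nul(T) = \dim(Z)$, $\dfc(T) = 0$, and $\ind(T) = \dim(Z) \in \{0,1,2,\dots\} \cup \{+\infty\}$. The inclusion $\incl_{Y, X}$ is always semi-Fredholm (trivial kernel, closed image), with $\ind(\incl_{Y, X}) = -\operatorname{codim}_X(Y) \in \{0,-1,-2,\dots\} \cup \{-\infty\}$. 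The hypothesis of the theorem forces $\dim(Z) \geq 1$ or $\operatorname{codim}_X(Y) \geq 1$, so these two indices cannot both vanish, whence $\ind(T) \neq \ind(\incl_{Y,X})$.

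Next I would prove the following generalization of Lemma~\ref{lem:main}: if $S \colon Y \to X$ is semi-Fredholm and $\ind(S) \neq \ind(\incl_{Y,X})$, then $S$ has both a strictly positive and a strictly negative real infinitely singular value. The proof is a verbatim adaptation of Lemma~\ref{lem:main}: consider the path $S^{+}_t := tS + (1-t)\incl_{Y,X}$ for $t \in [0,1]$. If no $S^{+}_t$ with $t \in (0,1)$ were infinitely singular, then by Proposition~\ref{prop:CaracFinSing} each such $S^{+}_t$ would be semi-Fredholm; together with the semi-Fredholmness of $S^{+}_0 = \incl_{Y,X}$ and $S^{+}_1 = S$, Theorem~\ref{thm:ContFredholm} would force $t \mapsto \ind(S^{+}_t)$ to be locally constant, hence constant, on the connected interval $[0,1]$---contradicting $\ind(\incl_{Y,X}) \neq \ind(S)$. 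So some $t_+ \in (0,1)$ makes $S^{+}_{t_+}$ infinitely singular, yielding the negative infinitely singular value $-\tfrac{1-t_+}{t_+}$ of $S$; the analogous argument with $S^{-}_t := tS - (1-t)\incl_{Y,X}$ produces a positive one. Applied to our $T$, this gives both a strictly positive and a strictly negative real infinitely singular value of $T$.

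To conclude, $Y$ is a subspace of the \gls{hi} space $X$ and is therefore itself \gls{hi}, so Lemma~\ref{lem:OneInfSing} applies and states that $T \colon Y \to X$ has at most one infinitely singular value---contradicting the existence of both a positive and a negative one. I do not foresee any serious obstacle. The one technical point worth double-checking is the local-constancy step at the endpoint $t=1$ when $\ind(T) = +\infty$ (the case $Z$ infinite-dimensional), but this creates no issue since Theorem~\ref{thm:ContFredholm} is stated for the index viewed in $\Z \cup \{-\infty,+\infty\}$.
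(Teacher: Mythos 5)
Your proposal is correct and follows essentially the same route as the paper: form the semi-Fredholm operator $Y\to X$ obtained by composing the quotient map with the isomorphism, observe that its index ($\geqslant 0$) differs from that of $\incl_{Y,X}$ ($\leqslant 0$), run the two homotopies $tS\pm(1-t)\incl_{Y,X}$ to produce a positive and a negative infinitely singular value, and contradict Lemma~\ref{lem:OneInfSing}. The paper phrases the middle step directly via continuity of the index rather than isolating your generalized Lemma~\ref{lem:main}, but the argument is identical, and your endpoint remark about $\ind = +\infty$ is handled correctly.
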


\begin{proof}
We follow the same approach as for \prettyref{thm:main}. Let $\pi \colon Y \twoheadrightarrow Y/Z$ be the quotient map. Suppose that there exists an isomorphism $T \colon Y/Z \to X$. Then both $\incl_{Y, X}$ and $T \circ \pi$ are semi-Fredholm operators $Y \to X$, and we have $\ind(\incl_{Y, X}) \leqslant 0$ and $\ind(T \circ \pi) \geqslant 0$. Moreover, one of the latter inequalities has to be strict. So $\ind(\incl_{Y, X}) \neq \ind(T \circ \pi)$.

\smallskip

By the continuity of the Fredholm index, there should exist $t \in (0, 1)$ such that $tT \circ \pi + (1-t)\incl_{Y, X}$ is not semi-Fredholm. This means that there exists $\lambda < 0$ such that $T \circ \pi - \lambda\incl_{Y, X}$ is not semi-Fredholm, so infinitely singular by \prettyref{prop:CaracFinSing}. In the same way, considering the operators $tT \circ \pi - (1-t)\incl_{Y, X}$, we see that there is a $\mu > 0$ such that $T \circ \pi - \mu\incl_{Y, X}$ is infinitely singular. This contradicts \prettyref{lem:OneInfSing}.
\end{proof}

We finish this section with a direct proof of the following result, which, as said before, is an immediate consequence of \prettyref{thm:operatorsHI} and \prettyref{prop:SSingPerturbation} in the special case of complex spaces, but which is also valid for real spaces.

\begin{thm}\label{thm:operators}
Let $T\in \cL(X)$, where $X$ is an \gls{hi} space. Then either $T$ is Fredholm with index $0$, or $T$ is strictly singular.
\end{thm}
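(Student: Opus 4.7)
I plan to split on whether $T$ is finitely singular or infinitely singular, showing in the first case that $T$ is Fredholm with index $0$ and in the second that $T$ is strictly singular.

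Suppose first that $T$ is finitely singular. Then by \prettyref{prop:CaracFinSing}, $T$ is semi-Fredholm with $\ind(T) < +\infty$. If $\ind(T) \neq 0$, then \prettyref{lem:main} produces two distinct real infinitely singular values of $T$, one positive and one negative. But $X$ is \gls{hi}, so \prettyref{lem:OneInfSing} applied with $Y = X$ permits at most one such value, a contradiction. Hence $\ind(T) = 0$; semi-Fredholmness together with a finite integer index forces both $\nul(T)$ and $\dfc(T)$ to be finite, so $T$ is Fredholm with index $0$.

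Suppose next that $T$ is infinitely singular. To show $T$ is strictly singular, I assume for contradiction that some restriction $T_{\restriction Y}$ is an into isomorphism, say with $\|T(y)\| \geq c\|y\|$ for every $y \in Y$ and some $c > 0$ (which in particular forces $T \neq 0$). Applying \prettyref{prop:CaracInfSing} with $\varepsilon = c/3$ gives a subspace $Y'$ of $X$ with $\|T_{\restriction Y'}\| \leq c/3$. Since $X$ is \gls{hi}, the subspaces $Y$ and $Y'$ are not in topological direct sum, so I can choose unit vectors $y \in S_Y$ and $y' \in S_{Y'}$ with $\|y - y'\|$ arbitrarily small; the triangle inequality then forces $\|T(y) - T(y')\| \geq 2c/3$, whereas $\|T(y) - T(y')\| \leq \|T\| \cdot \|y - y'\|$ can be made arbitrarily small, a contradiction.

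Neither step looks particularly delicate: the first case is a direct confrontation between \prettyref{lem:main} and \prettyref{lem:OneInfSing}, and the second merely observes that in an \gls{hi} space, infinite singularity must coincide with strict singularity. If anything is worth highlighting as an obstacle, it is noticing that this coincidence is precisely where the \gls{hi} hypothesis is being consumed in the infinitely singular case — in a general Banach space, infinite singularity is strictly weaker than strict singularity.
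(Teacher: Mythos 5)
Your proof is correct and follows essentially the same route as the paper: the finitely singular case is exactly the paper's confrontation of \prettyref{lem:main} with \prettyref{lem:OneInfSing}, and your infinitely singular case is precisely the content of the paper's \prettyref{lem:CaracStrictSing} (infinitely singular $\Rightarrow$ strictly singular in an \gls{hi} space), proved by the same approximation of almost-coinciding unit vectors in two subspaces.
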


\smallskip

In order to prove \prettyref{thm:operators}, we need the following lemma, that was already present as a remark in \cite{GowersMaureyHI}.

\begin{lem}\label{lem:CaracStrictSing}
Let $T\in \cL(X)$, where $X$ is an \gls{hi} space. Then $T$ is infinitely singular if and only if it is strictly singular.
\end{lem}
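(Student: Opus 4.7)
The plan is to prove the two implications separately; the forward direction (strictly singular $\Rightarrow$ infinitely singular) is immediate and does not use the \gls{hi} hypothesis, while the converse is the substantive part and mirrors the argument of \prettyref{lem:OneInfSing}.

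For the easy direction, suppose $T$ is strictly singular. If $T$ were finitely singular, there would exist a finite-codimensional (hence infinite-dimensional) subspace $X_0 \subseteq X$ such that $T_{\restriction X_0}$ is an into isomorphism, contradicting strict singularity. Thus $T$ is infinitely singular. Note this half is valid in any Banach space.

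For the converse, I would argue by contradiction. Suppose $T$ is infinitely singular but not strictly singular. Then there is a subspace $Y \subseteq X$ and a constant $c > 0$ such that $\|T(y)\| \geqslant c \|y\|$ for all $y \in Y$. Fix $\varepsilon > 0$ to be chosen later. Since $T$ is infinitely singular, by \prettyref{prop:CaracInfSing} there is a subspace $Z \subseteq X$ with $\left\|T_{\restriction Z}\right\| \leqslant \varepsilon$. Now invoke the \gls{hi} hypothesis on $X$: the subspaces $Y$ and $Z$ are not in topological direct sum, so there exist unit vectors $y \in S_Y$ and $z \in S_Z$ with $\|y - z\| \leqslant \varepsilon$. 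Then
\[
c \;\leqslant\; \|T(y)\| \;\leqslant\; \|T(y) - T(z)\| + \|T(z)\| \;\leqslant\; \|T\|\,\varepsilon + \varepsilon \;=\; (1 + \|T\|)\varepsilon.
\]
Choosing $\varepsilon < c/(1 + \|T\|)$ yields the desired contradiction, so $T$ must be strictly singular.

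I do not anticipate a genuine obstacle here: the argument is a direct exploitation of the \gls{hi} property combined with the characterization of infinite singularity supplied by \prettyref{prop:CaracInfSing}. The only point to be mildly careful about is ensuring that the subspace $Y$ witnessing the failure of strict singularity is infinite-dimensional (which is automatic under our convention) and that the unit-vector extraction from $Y$ and $Z$ is legitimate, both of which are immediate.
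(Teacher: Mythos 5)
Your proof is correct and follows essentially the same route as the paper: the easy implication from the definitions, and for the converse the combination of \prettyref{prop:CaracInfSing} with the \gls{hi} property to produce close unit vectors $y \in S_Y$, $z \in S_Z$ and the same triangle-inequality estimate. The only cosmetic difference is that you argue by contradiction with a lower bound $c$, whereas the paper directly shows that on every subspace $Y$ one can make $\|T(y)\|$ arbitrarily small; the content is identical.
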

\begin{proof}
One implication directly follows from the definitions. For the other, suppose that $T$ is infinitely singular. We fix $Y$ a subspace of $X$ and $\varepsilon > 0$, and we want to find $y \in S_Y$ such that $\|T(y)\| < \varepsilon$.

\smallskip

Since $T$ is infinitely singular, there exists a subspace $Z$ of $X$ such that $\left\|T_{\restriction Z}\right\| \leq \frac\varepsilon 2$. Since $Y$ and $Z$ are not in topological direct sum, we can find $y \in S_Y$ and $z\in S_Z$ with $\|y-z\| \leq \frac\varepsilon {2\|T\|}$. Then we have:
\[\|T(y)\| \leq \|T(z)\| + \|T(y - z)\| \leq \frac \varepsilon 2 + \|T\|\cdot \frac\varepsilon {2\|T\|} \leq \varepsilon,\]
as wanted.
\end{proof}

\begin{proof}[Proof of \prettyref{thm:operators}]
Suppose that $T$ is not strictly singular. By \prettyref{lem:CaracStrictSing}, it is finitely singular, so by \prettyref{prop:CaracFinSing}, it is semi-Fredholm. If it does not have index $0$, then by \prettyref{lem:main}, it has at least two infinitely singular values, contradicting \prettyref{lem:OneInfSing}.
\end{proof}

\bigskip\bigskip

\section{A quantitative version}\label{sec:Isomogeneous}

Say that a Banach space $X$ is \textit{homogeneous} is it is isomorphic to all of its subspaces, and \textit{isometrically homogeneous} if it is isometric to all of its subspaces. In \cite{CDDK}, the authors ask the following question:

\begin{question}[C\'uth--Dole\v{z}al--Doucha--Kurka]

Is every isometrically homogeneous Banach space isometric to $\ell_2$?

\end{question}

This problem, that we will call the \textit{isometrically homogeneous space problem}, is an isometric version of the \textit{homogeneous space problem}, asking whether every homogeneous space is isomorphic to $\ell_2$. The latter problem was solved positively in the nineties as a combination of the three followings results:
\begin{itemize}
    \item every Banach space either has a subspace isomorphic to $\ell_2$, or has a subspace without an unconditional basis (Komorowski--Tomczak-Jaegermann, \cite{KomorowskiTomczakJaegermann});
    \item every Banach space either has a subspace with an unconditional basis, or has an \gls{hi} subspace (\textit{Gowers' first dichotomy}, \cite{GowersRamsey});
    \item and \prettyref{thm:main}, by Gowers and Maurey.
\end{itemize}
To investigate the isometrically homogeneous space problem, it is tempting to try to find quantitative versions of the results above. Gowers' first dichotomy admits a quantitative version, that will be presented below (\prettyref{thm:Gowers1stDichotoQuant}) and which was proved by Gowers himself as a step of the proof of the same non-quantitative result. In this section, we give a quantitative version of \prettyref{thm:main}. This version, combined with the quantitative version of Gowers' first dichotomy, will give us the following result:

\begin{thm}\label{thm:isomogeneous}
Let $X$ be an isometrically homogeneous Banach space. Then for every $\varepsilon>0$, $X$ admits a $(1+\varepsilon)$-unconditional basis.
\end{thm}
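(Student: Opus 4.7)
The plan is to combine two quantitative dichotomies: the quantitative version of Gowers' first dichotomy (\prettyref{thm:Gowers1stDichotoQuant}, mentioned above) with a quantitative refinement of \prettyref{thm:main}, which will be the main technical content of this section. First, I would fix $\varepsilon > 0$ and apply the quantitative Gowers dichotomy to $X$ with a parameter tied to $\varepsilon$. This produces one of two alternatives: either $X$ has a subspace $Y$ admitting a $(1+\varepsilon)$-unconditional basis, or $X$ has a subspace $Y$ which is quantitatively \gls{hi} in a sufficiently strong sense. In the first alternative we are done immediately: by isometric homogeneity there is a surjective linear isometry $X \to Y$, and any such isometry transports a $(1+\varepsilon)$-unconditional basis on $Y$ to a $(1+\varepsilon)$-unconditional basis on $X$.

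To rule out the second alternative, one observes first that every subspace of an isometrically homogeneous space is itself isometrically homogeneous: for $Y \subseteq X$ and any subspace $Z \subseteq Y$, both $Y$ and $Z$ are (surjectively, linearly) isometric to $X$ as subspaces of $X$, and hence to each other. Applied to the quantitatively \gls{hi} subspace $Y$ and any proper (infinite-dimensional) subspace $Z \subsetneq Y$, this produces a surjective linear isometry $Y \to Z$; composing with the inclusion $Z \hookrightarrow Y$ yields a linear isometry $Y \to Y$ with image a proper subspace of $Y$. This should directly contradict the quantitative form of \prettyref{thm:main}, whose role is precisely to forbid near-isometric embeddings with proper image inside sufficiently \gls{hi} spaces.

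The main obstacle is therefore producing that quantitative refinement of \prettyref{thm:main}. The qualitative argument in \prettyref{sec:ProofMain} combined \prettyref{lem:main} (any semi-Fredholm operator with nonzero index has both a positive and a negative infinitely singular value) with \prettyref{lem:OneInfSing} (an operator from an \gls{hi} subspace to $X$ has at most one infinitely singular value). To make this quantitative, one would upgrade \prettyref{lem:OneInfSing} to the statement that if $Y$ is \gls{hi} with a small enough ``HI constant'', then any two infinitely singular values of a bounded operator on $Y$ must be close, with an explicit modulus depending on that constant and on $\|T\|$. Combined with the spread between positive and negative values supplied by \prettyref{lem:main}, this should yield a lower bound on the distortion $\|T\|\cdot\|T^{-1}\|$ of any into isomorphism $T\colon Y\to Y$ with proper image, strictly above $1$ once the HI constant of $Y$ is small enough; ruling out an into-isometry then follows immediately, closing the argument.
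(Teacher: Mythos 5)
Your outer architecture is exactly the paper's: combine the quantitative Gowers dichotomy (\prettyref{thm:Gowers1stDichotoQuant}) with a quantitative version of \prettyref{thm:main} (the paper's \prettyref{thm:mainQuant}), observe that isometric homogeneity passes to subspaces, and transport the unconditional basis back to $X$ by a surjective isometry; your first two paragraphs are fine. The gap is in your sketch of the quantitative version of \prettyref{thm:main}, and it matters because the two constants in Gowers' dichotomy are tied to each other: to get a $(1+\varepsilon)$-unconditional basis in the first horn you must apply the dichotomy with $C$ close to $1$, so the second horn only yields a $(1+\varepsilon')$-HI subspace, a very weak quantitative HI property --- you do not get to ask for a subspace that is ``quantitatively HI in a sufficiently strong sense''. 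Hence the statement you actually need is: a $(1+\delta)$-HI space, for arbitrarily small $\delta>0$, is isometric to no proper subspace of itself (this is \prettyref{thm:mainQuant} with $C=1$).

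Your proposed route to it --- upgrade \prettyref{lem:OneInfSing} to ``any two infinitely singular values are close, with a modulus depending on the HI constant and $\|T\|$'' --- cannot deliver this. If the HI constant is only $1+\delta$ with $\delta$ small, the best the definition yields is a pair of unit vectors $u\in U$, $v\in V$ with $\|u-v\|$ on the order of $2$ (roughly at most $4/(1+\delta)$), so the estimate from the proof of \prettyref{lem:OneInfSing} gives $|\lambda-\mu|\lesssim(2+|\mu|+\|T\|)\cdot 2$, which is vacuous; a useful closeness modulus would require a strongly HI subspace, which the dichotomy does not provide in the regime relevant here. Moreover, closeness is the wrong target: for an isometry $T$ onto a proper subspace every infinitely singular value has modulus between $1/\|T^{-1}\|$ and $\|T\|$, so the two values supplied by \prettyref{lem:main} are exactly $+1$ and $-1$, at distance $2$, and no closeness statement can hold. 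The paper's argument runs the other way: after normalizing $\|T^{-1}\|=1$ (so $\lambda\geqslant 1$), it takes the almost-kernels $U$ of $T-\lambda\Id_X$ and $V$ of $T+\mu\Id_X$ and shows, via the identity $(\lambda+\mu)(y+z)=2(\lambda y+\mu z)+(\mu-\lambda)(y-z)$, that \emph{every} $y\in U$, $z\in V$ satisfies $\|y+z\|\leqslant C'\|y-z\|$ with $C'$ arbitrarily close to $\|T\|\cdot\|T^{-1}\|$, i.e.\ the space is not $(C+\varepsilon)$-HI. You would need to replace your quantitative lemma by this (or an equivalent) computation; once \prettyref{thm:mainQuant} is available, the rest of your argument goes through as written.
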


We don't know if there is a sufficiently good quantitative version of Komorowski--Tomczak-Jaegermann's theorem to solve positively the isometrically homogeneous space problem.

\smallskip

In the rest of this section, we fix a Banach space $X$. Observe that if $Y$ and $Z$ are two subspaces of $X$, then $Y$ and $Z$ are not in topological direct sum if and only if for every $C \geqslant 1$, there exists $y \in Y$ and $z \in Z$ with $\|y+z\| > C \|y-z\|$. This motivates the following definition:

\begin{defin}

For $C \geqslant 1$, we say that $X$ is \textit{$C$-\gls{hi}} if for every subspaces $Y, Z \subseteq X$, there exists $y \in Y$ and $z \in Z$ such that $\|y+z\| > C \|y-z\|$.

\end{defin}

In particular, $X$ is \gls{hi} if and only if it is $C$-HI for arbitrarily large $C$. A slightly different definition of $C$-\gls{hi} spaces had already been given by Gowers in \cite{GowersRamsey}, for spaces with a basis, only taking into account block-subspaces. However, is is easy to see that these two notions are equivalent up to a constant.

\smallskip

Recall that two Banach spaces $Y$ and $Z$ are said to be \textit{$C$-isomorphic}, for some $C \geqslant 1$, if there is an isomorphism $T \colon Y \to Z$ with $\|T\|\cdot \|T^{-1}\| \leqslant C$. In particular, two spaces are $1$-isomorphic if and only if they are isometric. Our quantitative version of \prettyref{thm:main} is the following:

\begin{thm}\label{thm:mainQuant}

Let $C \geqslant 1$. Suppose that $X$ is $(C + \varepsilon)$-\gls{hi} for some $\varepsilon > 0$. Then $X$ is $C$-isomorphic to no proper subspace of itself.

\end{thm}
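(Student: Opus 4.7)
The plan is to mimic the proof of \prettyref{thm:main} but carry quantitative bookkeeping throughout. The central new ingredient I would establish is a quantitative refinement of \prettyref{lem:OneInfSing}: if $X$ is $(C+\varepsilon)$-HI, $Y \subseteq X$ is a subspace, $T \colon Y \to X$ is an operator, and $\lambda, \mu$ are two infinitely singular values of $T$, then
\[(C+\varepsilon)\,|\lambda - \mu| \;\leq\; 2\|T\| + |\lambda + \mu|.\]
The proof would follow the pattern of \prettyref{lem:OneInfSing}: for each $\eta>0$, pick subspaces $U_\eta, V_\eta$ on which $T-\lambda\incl_{Y,X}$ and $T-\mu\incl_{Y,X}$ have norm at most $\eta$, and apply the $(C+\varepsilon)$-HI property to obtain $y\in U_\eta$, $z\in V_\eta$ with $\|y+z\| > (C+\varepsilon)\|y-z\|$, normalized so that $\|y+z\|=1$. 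The key identity
\[\lambda y - \mu z \;=\; \tfrac{1}{2}(\lambda-\mu)(y+z) + \tfrac{1}{2}(\lambda+\mu)(y-z),\]
combined with $\|\lambda y - \mu z\| \leq \eta(\|y\|+\|z\|) + \|T\|\|y-z\|$ and $\|y\|+\|z\|\leq\|y+z\|+\|y-z\|$, yields the inequality after letting $\eta\to 0$.

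With this lemma in hand, I would suppose for contradiction that $T\colon X\to X$ is a $C$-isomorphism onto a proper subspace and rescale so that $\|T\|=1$ and $\|Tx\|\geq \|x\|/C$ for every $x\in X$. Since $\nul(T)=0$ and $\dfc(T)\geq 1$, $T$ is semi-Fredholm with $\ind(T)<0$, so \prettyref{lem:main} produces a positive infinitely singular value $\lambda_+$ and a negative one $\lambda_-$. Moreover, \prettyref{prop:CaracInfSing} forces $1/C \leq |\lambda| \leq 1$ for every infinitely singular value $\lambda$ of $T$: for any unit vector $z$ in a subspace where $T-\lambda\Id$ has norm at most $\eta$, $\|Tz\|$ lies within $\eta$ of $|\lambda|$, while $\|Tz\|\in[1/C,1]$.

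To close the argument, set $a=\lambda_+\geq 1/C$ and $b=-\lambda_-\geq 1/C$; the quantitative lemma yields $(C+\varepsilon)(a+b) \leq 2 + |a-b|$, which a symmetric case analysis on the sign of $a-b$ rewrites as
\[(C+\varepsilon-1)\,\max(a,b) + (C+\varepsilon+1)\,\min(a,b) \;\leq\; 2.\]
But $a,b\geq 1/C$ and $C+\varepsilon-1>0$ make the left-hand side at least $2(C+\varepsilon)/C > 2$, a contradiction. The main obstacle I expect is identifying the right quantitative form of \prettyref{lem:OneInfSing}: the bound must involve $|\lambda+\mu|$ rather than a coarser quantity like $|\lambda|+|\mu|$, because $\lambda_+$ and $\lambda_-$ have opposite signs and the cancellation in $|\lambda_+ + \lambda_-|$ is exactly what allows the final estimate to fall below the threshold of $2$ and yield the contradiction.
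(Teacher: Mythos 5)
Your proposal is correct and follows essentially the same route as the paper's proof: both obtain a positive and a negative infinitely singular value from \prettyref{lem:main}, bound them below using the isomorphism constants, and then run the computation of \prettyref{lem:OneInfSing} quantitatively against the $(C+\varepsilon)$-HI pairs $y,z$ to reach a contradiction. The only differences are cosmetic — your normalization $\|T\|=1$, $\|Tx\|\geqslant\|x\|/C$ versus the paper's $\|T\|\leqslant C$, $\|T^{-1}\|=1$, and your packaging of the estimate $(C+\varepsilon)|\lambda-\mu|\leqslant 2\|T\|+|\lambda+\mu|$ as a standalone quantitative lemma rather than an inline computation.
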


\begin{proof}

Suppose that $X$ is $C$-isomorphic to a proper subspace $Y \subseteq X$. Let $T \colon X \to Y$ be an isomorphism with $\|T\| \leqslant C$ and $\|T^{-1}\| = 1$; we see $T$ as an operator $X \to X$. By \prettyref{lem:main}, $T$ has at least two real infinitely singular values, a positive one that we denote by $\lambda$ and a negative one that we denote by $-\mu$. Replacing $T$ with $-T$ if necessary, we can assume that $\lambda \leqslant \mu$. We let $\nu = \mu - \lambda$. For every $a < 1$ and every  $x \in X$, we have $\|T(x) - a x\| \geqslant \|T(x)\| - a\|x\| \geqslant (1 - a)\|x\|$, so $T - a \Id_X$ is an into isomorphism and in particular, it is finitely singluar. We deduce that $\lambda \geqslant 1$.



\smallskip

We now use a similar method as in the proof of \prettyref{lem:OneInfSing} to conclude. Fix $\varepsilon > 0$. Then there exists subspaces $Y, Z \subseteq X$ such that $\left\|(T-\lambda\Id_X)_{\restriction Y}\right\| \leq \varepsilon$ and $\left\|(T+\mu\Id_X)_{\restriction Z}\right\| \leq \varepsilon$. Thus, for every $y \in Y$ and $z \in Z$, we have:
\begin{eqnarray*}
(\lambda + \mu)\|y+z\| & = & \|2 \lambda y + 2 \mu z + (\mu-\lambda)(y - z)\| \\
& \leqslant & 2\|\lambda y + \mu z\| + \nu \|y - z\| \\
& \leqslant & 2(\|\lambda y - T(y)\| + \|T(y) - T(z)\| + \|T(z) + \mu z\|) + \nu\|y - z\|\\
& \leqslant & 2\varepsilon(\|y\| + \|z\|) + 2C\|y - z\| + \nu \|y - z\| \\
&\leqslant & 2\varepsilon(\|y + z\| + \|y - z\|) + (2C + \nu)\|y - z\|
\end{eqnarray*}
Since $\lambda \geqslant 1$, we have $2 + \nu \leqslant \lambda + \mu$. Using this inequality on the left-hand side and the fact that $2C + \nu \leqslant (2 + \nu)C$ on the right-hand side, we get that:
$$(2 + \nu - 2\varepsilon)\|y+z\| \leqslant ((2 + \nu)C + 2\varepsilon)\|y - z\|.$$
We deduce that $X$ is not $\frac{(2 + \nu)C + 2\varepsilon}{2 + \nu - 2\varepsilon}$-\gls{hi}. Since $\frac{(2 + \nu)C + 2\varepsilon}{2 + \nu - 2\varepsilon}$ tends to $C$ when $\varepsilon$ tends to $0$, we get the wanted result.

\end{proof}

Now the quantitative version of Gowers' first dichotomy, proved in \cite{GowersRamsey} (see Corollary 3.2) is the following:

\begin{thm}[Gowers]\label{thm:Gowers1stDichotoQuant}

Let $C \geqslant 1$ and $\varepsilon > 0$. Then either $X$ has a subspace with a $(C + \varepsilon)$-unconditional basis, or a $C$-\gls{hi} subspace.

\end{thm}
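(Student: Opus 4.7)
The plan is to apply Gowers' Ramsey-type game dichotomy from \cite{GowersRamsey}, with the target set and the constants chosen so that the inevitable perturbation cost in the dichotomy can be absorbed into the slack between $C$ and $C+\varepsilon$. As a preliminary reduction I would use Mazur's theorem to pass to a subspace with a Schauder basis, and then invoke the (well known) fact that, for a space with a basis, being $C$-\gls{hi} is equivalent, up to an arbitrarily small loss in the constant, to the corresponding property restricted to block subspaces. After this reduction the problem becomes purely about block sequences in a basic sequence.

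The core step is to define $\sigma$ as the family of finite normalized block sequences whose associated basic constant for a suitable unconditionality bound is at most $C+\varepsilon/2$, and to apply Gowers' dichotomy to $\sigma$. This yields two alternatives: either there is a block subspace $Y$ in which Player II has a strategy forcing the game outcome into $\sigma$, up to an arbitrarily small perturbation, in which case the canonical basis of $Y$ is itself $(C+\varepsilon)$-unconditional; or there is a block subspace $Y$ in which Player I has a strategy producing outcomes outside $\sigma$. In the latter case I would show $Y$ is $C$-\gls{hi} as follows: given block subspaces $U, V \subseteq Y$, instruct Player II to play $U, V, U, V, \ldots$, so that Player I's strategy outputs a block sequence $(x_n)$ with $x_{2k-1}\in U$, $x_{2k}\in V$ together with coefficients $(a_n)$ and the sign pattern $\theta_n = (-1)^{n+1}$ violating the unconditional bound by a factor strictly greater than $C+\varepsilon/2$. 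Setting $u = \sum_k a_{2k-1} x_{2k-1} \in U$ and $v = \sum_k a_{2k} x_{2k} \in V$ gives $\|u-v\| > (C+\varepsilon/2)\|u+v\|$; replacing $v$ by $-v\in V$ converts this into $\|u+v\| > C\|u-v\|$, which is the defining property of $C$-\gls{hi}.

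The main obstacle, and the source of all the quantitative bookkeeping, is the correct choice of the target $\sigma$: in the ``good'' alternative one wants the \emph{full} unconditional constant of the basis of $Y$ to be at most $C+\varepsilon$, whereas in the ``bad'' alternative one only extracts the failure of the specific alternating-sign bound. Reconciling these two uses of $\sigma$ is the delicate point. The cleanest way I see is to define $\sigma$ in terms of the full unconditional constant, and to observe that an arbitrary sign pattern $(\eta_n)$ applied to $(x_n)$ can be realized as an alternating pattern applied to the renormalized block sequence $(\eta_n(-1)^{n+1} x_n)$, which is still a block sequence in $Y$ and so falls under Player I's strategy in the bad alternative; this reduces the ``bad'' conclusion to the alternating case above while keeping the ``good'' conclusion genuinely unconditional. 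The quantitative calibration then amounts to verifying that the Gowers perturbation cost, plus this conversion, stays within $\varepsilon/2$, after which $(C+\varepsilon)$ and $C$ drop out of the argument.
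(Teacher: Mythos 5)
First, note that the paper does not actually prove this statement: it is quoted from Gowers' Ramsey paper \cite{GowersRamsey} (Corollary 3.2), so your attempt is measured against Gowers' own argument, whose general route (weak Ramsey theorem for block sequences plus quantitative bookkeeping, after a Mazur-type reduction to block subspaces with arbitrarily small loss) you have correctly identified. However, two steps in your sketch would fail as written. The first is the statement of the dichotomy itself. Gowers' theorem is not a determinacy result in which one of the two players has a winning strategy: its horns are (i) a block subspace $Y$ such that \emph{no} block sequence of $Y$ belongs to $\sigma$, and (ii) a block subspace in which the vector-choosing player has a winning strategy to land in the expansion $\sigma_\Delta$. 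With your $\sigma$ (the \emph{good}, nearly unconditional sequences) this gives nothing: $\sigma$ is trivially large (every block subspace contains short, even singleton, $(C+\varepsilon/2)$-unconditional block sequences), and in any case a strategy forcing game outcomes into $\sigma_\Delta$ only controls the sequences produced during plays of the game, not all block sequences of $Y$, so it does not make the canonical basis of $Y$ unconditional. The correct choice is the complementary ``bad'' set, $\sigma=\{(x_1,\dots,x_{2n}) : \|\sum_i(-1)^{i+1}x_i\|>C'\|\sum_i x_i\|\}$ for some $C'$ strictly between $C$ and $C+\varepsilon$; then horn (i) yields the unconditional subspace, horn (ii) yields the HI subspace, and the perturbation $\Delta$ sits on the HI side, where the slack $C'-C$ absorbs it.

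The second gap is the sign-conversion trick meant to reconcile a full-unconditionality target with the alternating pattern needed for the HI extraction. Setting $y_n=\eta_n(-1)^{n+1}x_n$ gives $\sum_n \eta_n a_n x_n=\sum_n(-1)^{n+1}a_n y_n$, but $\sum_n a_n x_n=\sum_n \eta_n(-1)^{n+1}a_n y_n$, so the violation $\|\sum_n\eta_n a_n x_n\|>C'\|\sum_n a_n x_n\|$ only compares the alternating sum of $(y_n)$ with \emph{another signed} sum of $(y_n)$, not with the plain sum $\sum_n a_n y_n$; deducing alternating conditionality of $(y_n)$ from this would require precisely the unconditionality being denied. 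Equivalently, if the violating signs $\eta_n$ do not follow the parity, grouping by sign mixes vectors of $U$ and $V$, and you obtain no pair $u\in U$, $v\in V$ with $\|u+v\|>C\|u-v\|$. This is why the alternating-sign definition of $\sigma$ is the right one, and why the arbitrary-signs-to-alternating-signs reduction is performed on the \emph{good} horn, where it is legitimate: given arbitrary signs on a block sequence, group maximal runs of equal sign into single block vectors; the grouped family is again a block sequence, its signs alternate, and the unsigned sums coincide, so a subspace avoiding $\sigma$ has a genuinely $(C+\varepsilon)$-unconditional basis. With $\sigma$ chosen this way, your remaining ingredients (the play $U,V,U,V,\dots$, parity grouping, the flip $v\mapsto -v$, and the small-loss passage between block subspaces and general subspaces) are sound and reproduce Gowers' proof.
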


This, combined with \prettyref{thm:mainQuant}, is enough to prove \prettyref{thm:isomogeneous}. Indeed, if $X$ is isometrically homogeneous, then, fixing $\varepsilon > 0$ and applying \prettyref{thm:mainQuant} to $C = 1$, we get that $X$ is not $(1+\varepsilon)$-\gls{hi}, so contains no $(1+\varepsilon)$-\gls{hi} subspace. By \prettyref{thm:Gowers1stDichotoQuant} applied to $C = 1 + \varepsilon$, we get that $X$ has a subspace with a $(1+2\varepsilon)$-unconditional basis, so $X$ itself has a $(1+2\varepsilon)$-unconditional basis.

\bigskip\bigskip

\section{Homotopy between into isomorphisms}\label{sec:Homotopy}

Given $X$ and $Y$ two Banach spaces, let $\Emb(Y, X)$ the set all $T \in \cL(Y, X)$ that are into isomorphisms. This is an open subset of $\cL(Y, X)$. In the case where $X = Y$, this set is the general linear group of $X$ nd will be denoted as $\GL(X)$. Given two operators $S, T \in \Emb(Y, X)$, we say that $Y$ and $X$ are \textit{homotopic} if there is a continuous mapping:
$$ \begin{array}{rcl}
[0, 1] & \to & \Emb(Y, X) \\
t & \mapsto & T_t
\end{array}$$
such that $T_0 = S$ and $T_1 = T$. Homotopy is an equivalence relation whose classes are exactly the connected components of $\Emb(Y, X)$. In \cite{Maurey}, the following result is mentionned as an exercise, in the special case of complex spaces:

\begin{thm}\label{thm:HomotopyC}

Let $X$ be an \gls{hi} spaces and $Y, Z$ to subspaces of $X$ that are isomorphic. Then there is an into isomorphism $T \colon Y \to X$ with $T(Y) = Z$, and such that $T$ is homotopic to $\incl_{Y, X}$ in $\Emb(Y, X)$.

\end{thm}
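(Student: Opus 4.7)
My plan is to pick any isomorphism $\sigma\colon Y\to Z$ and view it as an operator $S\colon Y\to X$ via $S=\incl_{Z,X}\circ\sigma$; then $S\in\Emb(Y,X)$ with $S(Y)=Z$. Since any nonzero scalar multiple $\beta S$ (with $\beta\in\K\setminus\{0\}$) still has image $Z$, it suffices to find a continuous path in $\Emb(Y,X)$ from $\incl_{Y,X}$ to some such $\beta S$, and then take $T=\beta S$. The natural first candidate is the affine homotopy $T_t=(1-t)\incl_{Y,X}+t\beta S$ for $t\in[0,1]$, so the task reduces to controlling the two possible obstructions to $T_t\in\Emb(Y,X)$: either $T_t$ is infinitely singular, or it is finitely singular but has $\ker T_t\neq 0$.

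The infinitely singular obstruction is handled by \prettyref{lem:OneInfSing}, applied to $S\colon Y\to X$ (which is valid since subspaces of \gls{hi} spaces are \gls{hi}): the operator $S$ has at most one infinitely singular value $\lambda^*\in\K$. A direct computation shows that $T_t$ is infinitely singular exactly when $t=1/(1-\beta\lambda^*)$, so choosing $\beta$ with $\beta\lambda^*>0$ (or any $\beta\neq 0$ if $\lambda^*$ does not exist) pushes this bad parameter outside $[0,1]$. With such $\beta$ fixed, every $T_t$ is finitely singular, hence semi-Fredholm by \prettyref{prop:CaracFinSing}, and \prettyref{thm:ContFredholm} forces the Fredholm index $\ind(T_t)$ to be constant along $[0,1]$. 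As a byproduct, $\ind(\incl_{Y,X})=\ind(S)$.

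It remains to ensure $\ker T_t=0$ for every $t\in[0,1]$. A kernel vector $y$ at $t_0$ must satisfy $\sigma(y)=-\frac{1-t_0}{t_0\beta}\,y$, so $y\in Y\cap Z$ is an eigenvector of $\sigma$ with a specific eigenvalue. This motivates widening the parameter space: I would consider the two-parameter family $T_{(a,b)}=a\incl_{Y,X}+bS$ for $(a,b)\in\K^2\setminus\{0\}$, whose bad set is a union of lines through the origin---one for the infinitely singular value $\lambda^*$, and one for each eigenvalue of $\sigma$---and seek a path in this complement from $(1,0)$ to some $(0,\beta)$ with $\beta\neq 0$. In the complex case the bad set is a countable union of complex lines, which has real codimension two in $\C^2\setminus\{0\}$, and the complement is path-connected; the required path is then obtained by a routine routing argument, recovering the complex version of the theorem mentioned by Maurey.

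The genuine difficulty is the real case, since real lines through the origin disconnect $\R^2\setminus\{0\}$ into sectors. My plan is to apply \prettyref{thm:operators} to the restriction of $\sigma$ to a maximal $\sigma$-invariant (and automatically \gls{hi}) subspace of $Y\cap Z$ in order to show that the eigenvalues of $\sigma$ form at most a sequence with a single accumulation point, and then to exploit the freedom of replacing $\sigma$ by its post-composition with a suitable automorphism of $Z$, together with an appropriate sign choice for $\beta$, so that a quarter-arc path from $(1,0)$ to $(0,\beta)$ in a well-chosen quadrant of $\R^2$ avoids every bad line. The main obstacle I anticipate is handling the mixed-sign situation, where $\sigma$ admits both positive and negative eigenvalues and no single quadrant is free of bad lines; resolving this will require a more subtle use of the \gls{hi} structure, most likely via a combination of \prettyref{thm:operators} and a version of \prettyref{lem:main} adapted to operators $Y\to X$.
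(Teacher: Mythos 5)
Your reduction of the problem to the two obstructions (infinite singularity, nontrivial kernel) is correct, and your complex-case sketch is plausible; but that case is the easy one (it is the exercise mentioned in \cite{Maurey}), and in the real case --- which is the whole point of the statement in this paper --- your argument has a genuine, and by your own admission unresolved, gap. Concretely: (i) the step ``the eigenvalues of $\sigma$ form at most a sequence with a single accumulation point'' does not follow from \prettyref{thm:operators}, which only says that an operator on a real \gls{hi} space is strictly singular or Fredholm of index $0$ and gives no control on the point spectrum; extracting such spectral information in the real case is exactly what seems to force complexification and spectral theory, which is what one is trying to avoid. Moreover the auxiliary object you invoke, a ``maximal $\sigma$-invariant subspace of $Y\cap Z$'', is problematic: $Y\cap Z$ may be finite-dimensional or $\{0\}$ (two isomorphic subspaces of an \gls{hi} space need not intersect), and maximal invariant subspaces need neither exist nor capture all eigenvectors in an obvious way. (ii) Even granting countability of the bad directions, in $\R^2$ a countable set of bad lines can have dense angular directions, so a quarter-arc avoidance argument needs discreteness, not countability; and the mixed-sign situation, which you flag as the main obstacle, is precisely where your plan stops.

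The paper resolves the difficulty by a mechanism absent from your proposal, and it is worth comparing. Instead of analysing where the affine path $tT+(1-t)\incl_{Y,X}$ loses injectivity, one first uses \prettyref{lem:OneInfSing} (after possibly replacing $T$ by $-T$) to make every $T_t$ finitely singular, and then a compactness argument produces a \emph{single finite-codimensional} subspace $Z\subseteq Y$ on which every $T_t$ restricts to an into isomorphism --- the kernel obstruction is discarded wholesale by shrinking the domain, never analysed through eigenvalues. The finite-dimensional complement $F$ is then reinstated by \prettyref{lem:homotopy}, a segment-avoidance argument in $X$ using only that $\widetilde{T}_t(Z)$ is always a proper subspace (by the index); this yields \prettyref{thm:HomotopyR}: $T$ is homotopic to $\incl_{Y,X}$, $-\incl_{Y,X}$, a reflection or an antireflection of $Y$. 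Finally, \prettyref{thm:HomotopyC} follows because one is free to compose the given isomorphism with the inverse of that \emph{automorphism of $Y$}: the theorem only asks for some isomorphism onto $Z$ homotopic to $\incl_{Y,X}$. Your proposal only allows corrections by scalars and by automorphisms of $Z$ along a single planar family, and offers no substitute for the finite-codimensional restriction and the finite-dimensional correction lemma; as it stands, the real case is not proved.
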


In this section, we give a proof of \prettyref{thm:HomotopyC} working as well in the complex as in the real case. This result can be seen as an analogue of \prettyref{thm:main} for subspaces: if two subspaces of an \gls{hi} space are isomorphic, then none of them can be ``too deep'' compared to the other. In particular, \prettyref{thm:main} is a consequence of \prettyref{thm:HomotopyC}: given $X$ an \gls{hi} space and $Y \subseteq X$ a proper subspace, if $Y$ is isomorphic to $X$, then $\Id_X$ should be homotopic to an isomorphism from $X$ to $Y$, which is impossible by continuity of the Fredholm index.

\smallskip

We actually prove a slightly stronger result than \prettyref{thm:HomotopyC}. Given $Y$ a Banach space, recall that an operator $R \colon Y \to Y$ is a \textit{reflection} of $Y$ if there exists $H \subseteq Y$ a hyperplane and $x_0 \in Y \setminus \{0\}$ such that $\forall x \in H \; R(x) = x$ and $R(x_0) = - x_0$. Call an \textit{antireflection} of $Y$ an operator of the form $-R$, where $R$ is a reflection of $Y$. Any two reflections $R_0$ and $R_1$ of the same space $Y$ are homotopic: indeed, given $x_0, x_1 \in Y \setminus \{0\}$ and $l_0, l_1 \in Y^* \setminus \{0\}$ such that for all $i$, $\ker(l_i) = \ker(R_i - \Id_Y)$ and $R_i(x_i) = -x_i$, then, it is not hard to find a continuous path between $(x_0, l_0)$ and $(\delta x_1, \varepsilon l_1)$ for some choice of signs $\delta, \epsilon \in \{-1, 1\}$ in the set $\{(x, l) \in X \times X^* \mid l(x) \neq 0\}$. This directly gives an homotopy between $R_0$ and $R_1$. In the same way, any two antireflections of the same space are homotopic. In what follows, for simplicity of notation, given $Y \subseteq X$ two Banach spaces, we will sometimes confound an automorphism $U$ of $Y$ with the into isomorphism $\incl_{Y, X} \circ U \colon Y \to X$. The theorem we prove is the following:

\begin{thm}\label{thm:HomotopyR}

Let $X$ be a Banach space, $Y$ be an \gls{hi} subspace of $X$, and $T\colon Y \to X$ be an into isomorphism. Then $T$ is either homotopic, in $\Emb(Y, X)$, to $\incl_{Y, X}$, or to $- \incl_{Y, X}$, or to all reflections of $Y$, or to all antireflections of $Y$. Moreover, this result can be refined in the following cases:

\begin{enumerate}

\item If $Y \neq X$, then $T$ is either homotopic to $\incl_{Y, X}$, or to $- \incl_{Y, X}$;

\item If $\K = \C$, then $T$ is homotopic to $\incl_{Y, X}$.

\end{enumerate}

\end{thm}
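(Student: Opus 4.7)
My plan is to adapt the linear-interpolation technique of \prettyref{lem:main} to a homotopy question in $\Emb(Y,X)$, with \prettyref{lem:OneInfSing} providing the rigidity that forces the four-class structure. As a preliminary I would show $\ind(T)=\ind(\incl_{Y,X})$: running the argument of \prettyref{lem:main} on the paths $t\mapsto tT\pm(1-t)\incl_{Y,X}$ in $\cL(Y,X)$, an index mismatch would produce both a positive and a negative infinitely singular value of $T$ relative to $\incl_{Y,X}$, contradicting \prettyref{lem:OneInfSing}. Once the indices match, I try these linear homotopies themselves; if either stays inside $\Emb(Y,X)$ then $T\sim\pm\incl_{Y,X}$ and we are done.

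If both linear paths fail, each failure yields a real scalar (one negative, one positive) with $T-\lambda\incl_{Y,X}\notin\Emb(Y,X)$. By \prettyref{lem:OneInfSing} at most one is infinitely singular, so the other is semi-Fredholm with positive finite nullity, producing $y_0\in Y\setminus\{0\}$ with $T(y_0)=\lambda_0 y_0$ for some nonzero real $\lambda_0$. I then perform a rank-one surgery using $y_0$: fix a closed hyperplane $H\subseteq Y$ with $Y=H\oplus\K y_0$, and for a continuous path $v\colon[0,1]\to X\setminus T(H)$ with $v(0)=T(y_0)$ define $T_s(h+\alpha y_0)=T(h)+\alpha v(s)$. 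Injectivity of $T$ forces $T(H)\cap\K y_0=\{0\}$, which, combined with $v(s)\notin T(H)$, shows each $T_s$ is an into isomorphism. Since $T(H)$ has codimension $\dim(X/Y)+1$ in $X$, the set $X\setminus T(H)$ is path-connected when $Y\neq X$ and consists of two real half-spaces separated by $T(H)$ when $Y=X$; choosing $v(1)=\pm y_0$ in the component of $T(y_0)$ deforms $T$ to an operator $T_1$ whose action on $\K y_0$ equals $\pm\incl_{Y,X}$. The sign of $v(1)$ determines whether the target standard form is $\pm\incl_{Y,X}$ or $\pm\incl_{Y,X}\circ R_{y_0}$, where $R_{y_0}$ is the reflection of $Y$ fixing $H$ and flipping $y_0$; combined with the fact from the preamble that all reflections (resp.\ antireflections) of $Y$ are mutually homotopic in $\Emb(Y,X)$, this gives the four-class conclusion after an (iterative) argument handling the remaining eigen-directions on $H$.

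The refinements are proved by explicit collapses. For (1), when $Y\neq X$ pick $z\in X\setminus Y$: then $h+\alpha y_0\mapsto h+\alpha(\cos(\pi t)y_0+\sin(\pi t)z)$ is a path in $\Emb(Y,X)$ from $\incl_{Y,X}$ to $\incl_{Y,X}\circ R_{y_0}$ (one checks injectivity by noting that $\cos(\pi t)y_0+\sin(\pi t)z\in Y$ only at the endpoints), and a similar path collapses the antireflection class onto $-\incl_{Y,X}$, leaving only two classes. For (2), over $\C$ the path $\theta\mapsto e^{i\theta}\incl_{Y,X}$ lies in $\Emb(Y,X)$ and connects $\incl_{Y,X}$ to $-\incl_{Y,X}$, while the analogous rotation $R_t|_H=\Id_H$, $R_t(y_0)=e^{i\pi t}y_0$ inside $\GL(Y)$ deforms any reflection of $Y$ to $\Id_Y$, collapsing the remaining classes. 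The main obstacle is closing the argument of the second paragraph: a single rank-one surgery clears only one ``bad direction'', but $T_1|_H\colon H\to X$ may retain further eigenvalues or an infinitely singular value, so either an iterative argument (legitimate since $H$ is itself an \gls{hi} subspace of $X$, so \prettyref{lem:OneInfSing} applies verbatim to $T_1|_H$) or a stronger single-step surgery is needed to ensure convergence to a standard form.
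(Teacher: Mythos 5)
Your preliminary step (matching the index of $T$ with that of $\incl_{Y, X}$ via the two linear paths and \prettyref{lem:OneInfSing}), and your treatment of the refinements (1) and (2) by explicit rotations through an exterior vector, respectively through $e^{i\pi t}$, are fine and close to what the paper does. The genuine gap is in the core of the argument, and you have in fact located it yourself in your last paragraph. First, the rank-one surgery only normalizes $T$ on the single direction $\K y_0$ and leaves $T_{\restriction H}$ untouched; but the real difficulty is precisely to homotope the $H$-part to $\incl_{H, X}$ \emph{while keeping the full operator on $Y = H \oplus \K y_0$ an into isomorphism}: as you deform the $H$-part, its image can sweep across $\pm y_0$ (or come arbitrarily close to the line $\K y_0$), destroying bounded-belowness of the direct-sum operator, and your proposal offers no mechanism to control this interplay. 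Second, the fallback of iterating over ``remaining eigen-directions'' need not terminate: an into isomorphism $T \colon Y \to X$ can have infinitely many real eigenvalues relative to $\incl_{Y, X}$ (take $T = \incl_{Y, X} + S$ with $S$ strictly singular having infinitely many real eigenvalues accumulating at $0$, arranged so that $0$ is not among them), so a direction-by-direction surgery does not converge to a standard form, and passing to $H$, then to a hyperplane of $H$, etc., gives no finite reduction.

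The paper resolves exactly these two points by a different organization. Using \prettyref{lem:OneInfSing}, after replacing $T$ by $-T$ one may assume the whole segment $tT + (1-t)\incl_{Y, X}$ is \emph{finitely} singular; a compactness argument then produces a single finite-codimensional subspace $Z \subseteq Y$ on which every operator of the segment is an into isomorphism, so $T_{\restriction Z}$ is homotopic to $\incl_{Z, X}$ by the straight line, and all the trouble is concentrated in a finite-dimensional complement $F$. The key lemma (\prettyref{lem:homotopy}) then handles $F$ by induction on its dimension: in the one-dimensional case, one follows the given homotopy $\widetilde{T}_t$ of the $Z$-part, choosing on each small time interval a vector $x_i \notin \widetilde{T}_t(Z)$ and moving the image of the extra basis vector along segments $[\varepsilon x_i, \delta x_{i-1}]$ that avoid $\widetilde{T}_{t_i}(Z)$ (if both signs $\delta$ failed, a convex combination would put $\varepsilon x_i$ itself in $\widetilde{T}_{t_i}(Z)$); this is precisely the missing device that lets the finite-dimensional part ride along the homotopy of the infinite-dimensional part, at the cost of only a sign ambiguity, which is where the four classes $\incl_{Y,X}$, $-\incl_{Y,X}$, reflections and antireflections come from. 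To repair your plan you would need to prove something playing the role of \prettyref{lem:homotopy}; the rank-one surgery alone does not substitute for it.
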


\prettyref{thm:HomotopyR} implies \prettyref{thm:HomotopyC}: indeed, if $U$ is any isomorphism $Y \to Z$, then \prettyref{thm:HomotopyR} shows that $U$ is homotopic, in $\Emb(Y, X)$, to an automorphism $V$ of $Y$, so $U \circ V^{-1}$ is an isomorphism $Y \to Z$ which is homotopic, in $\Emb(Y, X)$, to $\incl_{Y, X}$. Another consequence of \prettyref{thm:HomotopyR} is that if $Y$ is any \gls{hi} space and $X$ any Banach space, then $\Emb(Y, X)$ has:
\begin{itemize}
\setlength\itemsep{0pt}
\item at most four connected components if $\K = \R$ and $X$ is isomorphic to $Y$;
\item at most two connected components if $\K = \R$ and $X$ is not isomorphic to $Y$;
\item at most one connected component if $\K = \C$.
\end{itemize}

In what follows, we use the following notation. If $Y$ and $Z$ are (finite- or infinite-dimensional) subspaces of the same space $X$ that are in topological direct sum, and if $T \colon Y \to X$ and $U \colon Z \to X$ are two operators, then we denote by $T \oplus U$ the unique operator $Y \oplus Z \to X$ extending both $T$ and $U$.  The main ingredient in the proof of \prettyref{thm:HomotopyR} is the following lemma:

\begin{lem}\label{lem:homotopy}

Let $X$ be a Banach space, $Z \subseteq X$ be a subspace, and $F \subseteq X$ be a finite-dimensional subspace such that $Z \cap F = \{0\}$. Let $Y = Z \oplus F$, let $T \colon Y \to X$ be an into isomorphism, and suppose that $T_{\restriction Z}$ is homotopic to $\incl_{Z, X}$ in $\Emb(Z, X)$. Let $R$ be a reflection of $F$. Then, in $\Emb(Y, X)$, $T$ is either homotopic to $\incl_{Y, X}$, or to $\incl_{Z, X} \oplus R$.

\end{lem}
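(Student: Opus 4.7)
The plan is a two-phase strategy. In Phase~1, the hypothesis that $T|_Z$ is homotopic to $\incl_{Z,X}$ is used to homotope $T$ within $\Emb(Y, X)$ to an operator of the form $\incl_{Z,X} \oplus B$ for some $B\in\cL(F,X)$. In Phase~2, the homotopy type of the resulting space of admissible $B$'s is analyzed, and one shows that $B$ is path-homotopic in that space to $\incl_{F,X}$ or to $\incl_{F,X} \circ R$.

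For Phase~1, I would first identify $\cL(Y, X)$ with $\cL(Z, X) \times \cL(F, X)$ via restriction. This makes $\Emb(Y, X)$ an open subset of the product, and the restriction map $\rho \colon \Emb(Y, X) \to \Emb(Z, X)$ becomes the first projection. The problem then reduces to lifting the continuous path $(A_s)_{s \in [0, 1]}$ in $\Emb(Z, X)$ given by hypothesis to a continuous path in $\Emb(Y, X)$ starting at $T$. Two facts enable this lift. First, $\Emb(Y, X)$ is open, so each fiber $\rho^{-1}(A)$ is open in $\cL(F, X)$. Second, each such fiber is dense: indeed, $B \in \rho^{-1}(A)$ iff the composition $\pi_A \circ B \colon F \to X/A(Z)$ is injective, where $\pi_A$ is the quotient, and $\dfc(A) = \dim X/A(Z) \geq \dim F$, which holds at $A_0 = T|_Z$ because $T$ is an into isomorphism on $Z \oplus F$, and is preserved along the path by continuity of the Fredholm index (\prettyref{thm:ContFredholm}). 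A compactness argument on $[0, 1]$ then produces a finite cover by intervals on each of which a single $B^{(i)} \in \cL(F, X)$ satisfies $(A_s, B^{(i)}) \in \Emb(Y, X)$. Linear interpolation between consecutive $B^{(i)}$'s, chosen (using density) close enough in norm to each other that the interpolating segment stays near a fixed point of $\Emb(Y, X)$, then yields a continuous lift.

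For Phase~2, set $\G := \{B' \in \cL(F, X) : B' \text{ is injective and } B'(F) \cap Z = \{0\}\}$, and note that it suffices to join $B$ in $\G$ to $\incl_{F,X}$ or to $\incl_{F,X} \circ R$. Post-composition with the quotient $\pi \colon X \twoheadrightarrow X/Z$ gives a map $\G \to \mathcal{H}$, where $\mathcal{H}$ is the set of injective linear maps $F \to X/Z$. Its fibers are affine spaces modeled on the Banach space $\cL(F, Z)$, hence contractible; and applying the Bartle--Graves selection theorem to $\pi$, extended linearly from a basis of $F$, yields a continuous section $\mathcal{H} \to \G$. Together these force $\G \to \mathcal{H}$ to induce a bijection on path components. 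Now $\mathcal{H}$ is the manifold of $n$-frames ($n = \dim F$) in the Banach space $X/Z$, and a classical computation yields: one path component if $\dim X/Z > n$ (equivalently $Y \neq X$) or if $\K = \C$; two path components distinguished by the sign of the determinant if $\K = \R$ and $\dim X/Z = n$ (equivalently $Y = X$). Since $\det R = -1$, the elements $\pi \circ \incl_{F,X}$ and $\pi \circ \incl_{F,X} \circ R$ of $\mathcal{H}$ represent both components in the two-component case, and coincide in the unique component otherwise. Hence every element of $\G$ is path-homotopic in $\G$ to $\incl_{F,X}$ or to $\incl_{F,X} \circ R$, giving the conclusion.

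The main technical obstacle is the path-lifting in Phase~1: the fibers of $\rho$ are not convex, so a straight segment between two valid $B$-values can leave $\Emb(Y, X)$. The plan handles this by using the density of good $B$'s to keep interpolation segments arbitrarily short in norm, combined with the uniform lower bound (by compactness of $[0, 1]$) on the distance of each $(A_s, B^{(i)})$ to the boundary of $\Emb(Y, X)$. The orientation analysis in Phase~2 is standard but also needs a little care in the real codimension-$n$ case.
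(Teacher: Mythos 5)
Your overall architecture (lift the homotopy of $T_{\restriction Z}$ to $\Emb(Y,X)$, then classify the endpoint fibre via frames in $X/Z$) is genuinely different from the paper's, and Phase~2 is essentially sound. But Phase~1 has a real gap, and it sits exactly at the crux of the lemma: the junction transitions. To pass from $B^{(i)}$ to $B^{(i+1)}$ at a junction $s_{i+1}$ you need a path \emph{inside the fibre} $\rho^{-1}(A_{s_{i+1}})$ joining them, and your proposed mechanism — choose $B^{(i+1)}$ ``close'' to $B^{(i)}$ using density and interpolate linearly — does not deliver this. Density of the single fibre over $A_{s_{i+1}}$ does not produce an operator close to $B^{(i)}$ that also works on \emph{all} of the next interval: the set $W_{i+1}=\bigcap_{s\in I_{i+1}}\rho^{-1}(A_s)$ is open and nonempty but there is no reason it accumulates at $B^{(i)}$ (indeed $B^{(i)}$ may lie on the boundary of the region where it is usable, so the distance-to-boundary bound degenerates precisely where you need it). More fundamentally, when $\K=\R$ and $\dfc(A_s)=\dim F$ along the path, each fibre is homeomorphic to an open subset of $\cL(F,X)$ retracting onto something with \emph{two} path components (your own Phase~2 computation), so $B^{(i)}$ and $B^{(i+1)}$ may simply lie in different components and no path between them exists; one must instead replace $B^{(i+1)}$ by $B^{(i+1)}\circ R'$ for a reflection $R'$ of $F$ and track the accumulated sign. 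That is, the dichotomy in the conclusion is generated \emph{during} the lifting, at every junction, not only in the endpoint fibre, and your plan contains no argument for crossing a junction.

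This missing step is precisely what the paper's proof supplies. The paper first reduces to $\dim F=1$ by induction (using that any two reflections of $F$ are homotopic to absorb the proliferation of cases), and then handles each junction by the elementary observation that the two segments $[\varepsilon x_i,\, x_{i-1}]$ and $[\varepsilon x_i,\, -x_{i-1}]$ cannot both meet the closed hyperplane-like subspace $\widetilde{T}_{t_i}(Z)$ — otherwise a convex combination would put $x_i$ itself in $\widetilde{T}_{t_i}(Z)$. One of the two linear segments therefore stays in the fibre, at the cost of a possible sign flip, which is exactly the reflection ambiguity in the statement. To make your proof work you would need either this trick (after your own reduction to $\dim F=1$), or a genuine local-triviality/path-lifting statement for the restriction map $\rho\colon\Emb(Y,X)\to\Emb(Z,X)$ together with a component analysis of the fibres \emph{along the whole path}, neither of which is in your write-up. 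Phase~2 also leans on an unproved ``classical computation'' that the space of $n$-frames in an infinite-dimensional Banach space $X/Z$ is path-connected; this is true, but its proof is again essentially the same one-vector-at-a-time segment argument, so it cannot be waved away as standard Stiefel-manifold topology.
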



\begin{proof}[Proof of \prettyref{lem:homotopy}]

We first prove that the special case where $\dim(F) = 1$ implies the general case. This will be done by induction on $\dim(F)$. Suppose $\dim(F) \geqslant 2$ and let $R$ be a reflection of $F$. Let $G = \ker(R-\Id_F)$ and let $x_0 \in F \setminus \{0\}$ such that $R(x_0) = -x_0$, so we have $F = G \oplus \K x_0$. Let $R'$ be any reflection of $G$ and let $R'' = R' \oplus \incl_{\K x_0, F}$, which is a reflection of $F$. Applying the induction hypothesis to $Z$, $G$, the operator $T_{\restriction Z \oplus G}$ and the reflection $R'$, we obtain that $T_{\restriction Z \oplus G}$ is either homotopic to $\incl_{Z \oplus G, X}$, or to $\incl_{Z, X} \oplus R'$. So either $T_{\restriction Z \oplus G}$, or $T_{\restriction Z} \oplus \left(T_{\restriction G} \circ R'\right)$ is homotopic to $\incl_{Z \oplus G, X}$. Applying the one-dimensional case to $Z \oplus G$, $\K x_0$, respectively the operator $T_{\restriction Y}$ or the operator $T_{\restriction Z} \oplus \left(T_{\restriction F} \circ R''\right)$, and the reflection $-\Id_{\K x_0}$, we get that one of the operators $T_{\restriction Y}$ and $T_{\restriction Z} \oplus \left(T_{\restriction F} \circ R''\right)$  is homotopic to one of the operators  $\incl_{Y, X}$ and $\incl_{Y \oplus G, X} \oplus \left(-\incl_{\K x_0, X}\right) = \incl_{Z, X} \oplus R$. Summarizing, we get that the operator $T_{\restriction Y}$ is homotopic to one of the four following operators:
\begin{itemize}
\setlength\itemsep{0pt}
\item $\incl_{Y, X}$;
\item $\incl_{Z, X} \oplus R$;
\item $\incl_{Z, X} \oplus R''$;
\item $\incl_{Z, X} \oplus (R \circ R'')$.
\end{itemize}
But since the reflections $R$ and $R''$ are homotopic in $\Emb(F, F)$, we are actually in one of the first two cases, which finishes the first part of this proof.

\smallskip

We now prove the result in the special case where $\dim(F) = 1$. Let $\left(\widetilde{T}_t\right)_{t \in [0, 1]}$ be an homotopy between $\incl_{Z, X}$ and $T_{\restriction Z}$ in $\Emb(Z, X)$. By local constancy of the Fredholm index, for every $t \in [0, 1]$ we have $\widetilde{T}_t(Z) \neq X$. So by a standard compacity argument, we can find $0 = t_0 < t_1 < \ldots < t_n = 1$ and vectors $x_1, \ldots, x_n \in X$ such that for every $i < n$ and every $t \in [t_i, t_{i + 1}]$, we have $x_i \notin \widetilde{T}_t(Z)$. We can even assume that $F = \K x_0$ and that $x_{n-1} = T(x_0)$.

\smallskip

For every $t \in [0, 1]$ and every $x \in X$, we abusively denote by $\widetilde{T}_t \oplus x$ the unique operator $U \colon Y \to X$ such that $U_{\restriction Z} = \widetilde{T}_t$ and $U(x_0) = x$. Since $x_0 \mapsto - x_0$ is the only reflection of $F$, what we have to prove is that $T = \widetilde{T}_1 \oplus x_{n - 1}$ is homotopic to $\widetilde{T}_0 \oplus \varepsilon x_0$ in $\Emb(Y, X)$ for some sign $\varepsilon \in \{-1, 1\}$. To prove that, first observe that for every $i < n$ and for every $\varepsilon \in \{-1, 1\}$, $\widetilde{T}_{t_i} \oplus \varepsilon x_i$ and $\widetilde{T}_{t_{i+1}} \oplus \varepsilon x_i$ are homotopic in $\Emb(Y, X)$. So to conclude, it is enough to show that for every $1 \leqslant i < n$ and for every sign $\varepsilon \in \{-1, 1\}$, there exists a sign $\delta \in \{-1, 1\}$ such that $\widetilde{T}_{t_i} \oplus \varepsilon x_i$ is homotopic to $\widetilde{T}_{t_i} \oplus \delta x_{i-1}$ in $\Emb(Y, X)$.

\smallskip

To see this this, suppose that both segments $[\varepsilon x_i, x_{i-1}]$ and $[\varepsilon x_i, - x_{i-1}]$ intersect $\widetilde{T}_{t_i}(Z)$. This means that there exists $t, t' \in (0, 1)$ such that $t\varepsilon x_i + (1-t)x_{i-1} \in \widetilde{T}_{t_i}(Z)$ and $t'\varepsilon x_i - (1-t')x_{i-1} \in \widetilde{T}_{t_i}(Z)$. So $\left(\frac{t}{1-t} + \frac{t'}{1-t'}\right)\varepsilon x_i \in \widetilde{T}_{t_i}(Z)$, a contradiction. Thus, there exists $\delta \in \{-1, 1\}$ such that the segment $[\varepsilon x_i, \delta x_{i-1}]$ does not intersect $\widetilde{T}_{t_i}(Z)$. This means that the family $(\widetilde{T}_{t_i} \oplus (t\varepsilon x_i + (1-t)\delta x_{i-1}))$ is an homotopy between $\widetilde{T}_{t_i} \oplus \varepsilon x_i$ and $\widetilde{T}_{t_i} \oplus \delta x_{i-1}$ in $\Emb(Y, X)$, concluding the proof.

\end{proof}

\begin{proof}[Proof of \prettyref{thm:HomotopyR}]

We first prove the general case. \prettyref{lem:OneInfSing} show us that $T$ has at most one infinitely singular value. Replacing $T$ with $-T$ if necessary, we may assume that $T$ has no nonpositive infinitely singular value. This implies that for every $t \in [0, 1]$, the operator $tT + (1 - t)\incl_{Y, X}$ is finitely singular. For every $t \in [0, 1]$, we can find $Z_t$ a finite-codimensional subspace of $Y$ such that $(tT + (1 - t)\incl_{Y, X})_{\restriction Z_t}$ is an into isomorphism; there exists $U_t$ a neighborhood of $t$ in $[0, 1]$ such that for every $t' \in U_t$, $(t'T + (1 - t')\incl_{Y, X})_{\restriction Z_t}$ is still an into isomorphism. Select $t_1, \ldots t_n \in [0, 1]$ such that $\bigcup_{i=1}^n U_{t_i} = [0, 1]$, and let $Z = \bigcap_{i = 1}^n Z_{t_i}$. Then $Z$ is a finite-codimensional subspace of $Y$ such that for every $t \in [0, 1]$, $(tT + (1 - t)\incl_{Y, X})_{\restriction Z}$ is an into isomorphism. Thus, $T_{\restriction Z}$ and $\incl_{Z, X}$ are homotopic in $\Emb(Z, X)$.

\smallskip

Taking $F$ any complement of $Z$ in $Y$ and $R$ any reflection of $Z$, we can apply \prettyref{lem:homotopy} to $Z$, $F$, $T$ and $R$, showing that $T$ is either homotopic to $\incl_{Y, X}$, or to $\incl_{Z, X} \oplus R$ in $\Emb(Y, X)$. This concludes the general case, since $\incl_{Z, X} \oplus R$ is a reflection of $Y$.

\smallskip

To prove the case where $Y \neq X$, it is enough to show that any reflection of $Y$ is homotopic to $\incl_{Y, X}$ in $\Emb(Y, X)$. Given $R$ such a reflection, consider $x_0 \notin Y$ any vector, let $Z = Y \oplus \K x_0$ and $R' = R \oplus \incl_{\K x_0, Z}$. This defines a reflection of $Z$. This reflection is homotopic to the reflection $\incl_{Y, Z} \oplus \left(- \incl_{\K x_0, Z}\right)$ of $Z$ in $\Emb(Z, X)$, and this homotopy restricts into an homotopy between $R$ and $\incl_{Y, X}$ is $\Emb(Y, X)$.

\smallskip

To prove the case where $\K = \C$, it is enough to prove that for any complex space $X$, and for any decomposition $X = Y \oplus Z$ (where $Y$ and $Z$ can be finite- or infinite-dimensional), the operator $\incl_{Y, X} \oplus \left(- \incl_{Z, X}\right)$ is homotopic to $\Id_X$. An homotopy is given by $\left(\incl_{Y, X} \oplus \left(e^{i \pi t} \incl_{Z, X}\right)\right)_{t \in [0, 1]}$.

\end{proof}

\begin{rem}
\prettyref{thm:HomotopyR} is optimal, in the sense that there exists a real \gls{hi} space $X$ such that for every reflection $R$ of $X$, the operators $\Id_X$, $-\Id_x$, $R$ and $-R$ are pairwise non-homotopic in $\Emb(X, X)$, and for every subspace $Y$ of $X$, the operators $\incl_{Y, X}$ and $- \incl_{Y, X}$ are non-homotopic in $\Emb(Y, X)$. In particular, there exists a real Banach space whose general linear group has exactly four connected components. We sketch the proof of these facts below.

\smallskip

Take for $X$ the real space built by Gowers and Maurey in \cite{GowersMaureyHI}. It has the following property: for every subspace $Y$ of $X$, every operator $T \colon Y \to X$ has the form $\lambda \Id + S$, where $\lambda \in \R$ and $S$ is strictly singular. It is not hard to see that $\lambda$ is continuous in $T$ (use for example a similar method as in the proof of \prettyref{lem:OneInfSing}), thus showing that $\incl_{Y, X}$ and $- \incl_{Y, X}$ cannot be homotopic in $\Emb(Y, X)$.

\smallskip

We now prove that $\Id_X$ cannot be homotopic to a reflection of $X$ in $\Emb(X, X)$. For a real space Banach $Y$ and an operator $T \colon Y \to Y$, we denote its complexification by $T^\C \colon Y^\C \to Y^\C$. Denote by $\sigma(T)$ the spectrum of an operator $T$ on a complex space. Recall the following facts:

\begin{enumerate}
    \item If $T$ is an operator on a real space, then the spectrum of $T_\C$ is invariant under conjugation, and if $\lambda$ is an eigenvalue of $T_\C$, then $\bar \lambda$ is also one, with the same multiplicity;
    
    \item The spectrum of a strictly singular operator on a complex space consists in $0$ together with a either finitely many nonzero eigenvalues with finite multiplicity, or a sequence of nonzero eigenvalues with finite multiplicity converging to $0$ (see \cite{Maurey}, Proposition 6.1);
    
    \item If $T \colon Y \to Y$ is an operator on a complex space, and $V$ an open subset of $\C$ such that $\sigma(T) \cap V$ consists in finitely many eigenvalues of $T$ with finite multiplicity, then for every $S \in \cL(Y)$ close enough to $T$, $\sigma(S) \cap V$ consists in finitely many eigenvalues of $S$ and the sum of their multiplicities is equal to the sum of the multiplicities of the eigenvalues of $T$ in $V$ (see \cite{Kato}, Chapter Four, subsection 3.5).
    
\end{enumerate}

Now if there exists an homotopy $(T_t)_{t \in [0, 1]}$ in $\Emb(X, X)$ between $T_0 = \Id_X$ and $T_1 = R$ a reflection, then without loss of generality we can assume that for every $t \in [0, 1]$, $T_t = \Id_X + S_t$, where $S_t$ is strictly singular. In particular, $0 \notin \sigma({T_t}^\C)$, $\sigma({T_t}^\C)$ contains only finitely many negative elements and all of these elements are eigenvalues with finite multiplicity. Now if $t \in [0, 1]$ and $V$ is an open subset of $\C$ such that $V \cap \R = (- \infty, 0)$ and such that $\sigma({T_t}^\C) \cap V$ is exactly the set of negative eigenvalues of ${T_t}^\C$, then for $s$ in a neigborhood of $t$, the sum of the multiplicities of the eigenvalues of ${T_s}^\C$ that are in $V$ is equal to the sum of the multiplicities of the negative eigenvalues of ${T_t}^\C$. By invariance of the spectrum under conjugation, the sum of the multiplicities of ${T_s}^\C$ that are in $V \setminus (0, \infty)$ is even. Thus, the parity of the sum of the multiplicities of the negative eigenvalues of ${T_t}^\C$ is localy constant, so constant. This contradicts the fact that this sum is $0$ for $T_0$, and $1$ for $T_1$.

\end{rem}

\bigskip

\bibliographystyle{plain}
\bibliography{main}

  \par
  \bigskip
    \textsc{\footnotesize N. de Rancourt, Department of Logic, Faculty of Arts, Charles University, n\'am. Jana Palacha 2, 116 38 Praha 1, CZECH REPUBLIC}
    
    {\small \textit{Current adress:} Universit\"at Wien, Institut f\"ur Mathematik, Kurt G\"odel Research Center, Augasse 2-6, UZA 1 -- Building 2, 1090 Wien, AUSTRIA

    \textit{Email address:} \texttt{noe.de.rancourt@univie.ac.at}}

\end{document}